\newcommand\restr[2]{{ 
  \left.\kern-\nulldelimiterspace 
  #1 
  \vphantom{\big|} 
  \right|_{#2} 
  }}
\theoremstyle{plain}
\newtheorem{theorem}{Theorem}
\newtheorem{lemma}{Lemma}
\newtheorem{proposition}{Proposition}
\theoremstyle{definition}
\newtheorem{definition}{Definition}
\newtheorem{remark}{Remark}
\newenvironment{manualtheorem}[1]{%
  \manualtheoreminner
}{\endmanualtheoreminner}
\title[Equilibrium states for partially hyperbolic maps with 1-d center]{Equilibrium states for partially hyperbolic maps with one-dimensional center}
\date{\today}
\author{Carlos F. Álvarez}
\address{Universidad del Sinú Seccional Cartagena, Área de Ciencias Básicas Exactas,\\ Av. El Bosque, Trasnversal 54 Nº 30-72, Cartagena de Indias 130001, Colombia}
\email{carlosfalvarez@unisinu.edu.co}
\author{Marisa Cantarino}
\address{Universidade Federal Fluminense, Instituto de Matemática e Estatística, \\ Rua Professor Marcos Waldemar de Freitas Reis, s/n - Campus do Gragoata\\ Niterói 24210-201, Brazil}
\email{\Letter mcantarino@id.uff.br}
\thanks{\textbf{Acknowledgments:} CF. A. thanks Universidad del Sinú, seccional Cartagena for the opportunity to develop this work. M. C. was partially financed by the Coordenação de Aperfeiçoamento de Pessoal de Nível Superior - Brasil (CAPES) and Fundação Carlos Chagas Filho de Amparo à Pesquisa of the State of Rio de Janeiro (FAPERJ)}
\keywords{Equilibrium states; measures of maximal entropy; partially hyperbolic endomorphisms; intrinsic ergodicity}
\subjclass[2020]{Primary: 37D35; Secondary: 37D30}
\begin{document}
 
\begin{abstract}

We prove the existence of equilibrium states for partially hyperbolic endomorphisms with one-dimensional center bundle. We also prove, regarding a class of potentials, the uniqueness of such measures for endomorphisms defined on the $2$-torus that: have a linear model as a factor; and with the condition that this measure gives zero weight to the set where the conjugacy with the linear model fails to be invertible.  In particular, we obtain the uniqueness of the measure of maximal entropy. For the $n$-torus, the uniqueness in the case with one-dimensional center holds for absolutely partially hyperbolic maps with additional hypotheses on the invariant leaves, namely, dynamical coherence and quasi-isometry.

\end{abstract}
\maketitle

\section{Introduction}

Ergodic theory is the branch of mathematics that studies dynamical systems equipped with an invariant probability measure. This theory was motivated by statistical mechanics, aiming, for instance, to understand and solve problems connected with the kinetic theory of gases, such as whether a Hamiltonian system is ergodic or not. A system is called \textit{ergodic} when it is dynamically indivisible from a measure theoretical point of view, meaning that each invariant set has either zero or total measure. 

There are two kinds of entropy used to describe the complexity of a dynamical system: metric entropy and topological entropy. The \textit{metric entropy} provides the maximum amount of average information one can obtain from a system with respect to an invariant measure, and the \textit{topological entropy} describes the exponential growth rate of the number of orbits. 

The variational principle establishes that the topological entropy coincides with the supremum of the metric entropy over all invariant measures. A \textit{measure of maximal entropy} is an invariant measure with its metric entropy coinciding with the topological entropy of the system, in other words, it is a measure in which the supremum is achieved. Such measures ``perceive'' the whole complexity of the system. 

A generalization of topological entropy is the \textit{pressure} of the map $f: X \to X$ with respect to a continuous potential $\phi: X \to \mathbb{R}$, which is a ``weighted entropy'' according to $\phi$. This quantity also obeys a variational principle, stating that the pressure coincides with the supremum of the metric entropy plus $\int \phi d\mu$ over all invariant measures $\mu$. An invariant measure for $f$ is called an \textit{equilibrium state} if it achieves this supremum. This measure maximizes the free energy of the system. In particular, an equilibrium state with respect to the potential $\phi \equiv 0$ is called \textit{measure of maximal entropy.}

Describing the existence and uniqueness/finiteness of equilibrium states is an active research topic in dynamical systems. A particular case of interest is when a system has a unique measure of maximal entropy, and we say that it is \textit{intrinsically ergodic}. Uniformly hyperbolic diffeomorphisms are intrinsically ergodic \cite{bowen,margulis}, while partially hyperbolic diffeomorphisms can be intrinsically ergodic \cite{buzzifishersambarinovasquez2012} or not, as illustrated by a simple example. Indeed, consider $f = A \times Id: \mathbb{T}^n \times \mathbb{S}^1 \to \mathbb{T}^n \times \mathbb{S}^1$, where $A$ is an Anosov toral automorphism. Then $A$ has only one measure of maximal entropy, the volume $m$ on $\mathbb{T}^n$, and $m \times \mu$ has the same metric entropy as $m$ for any probability measure $\mu$ on $\mathbb{S}^1$. This product measure is $f$-invariant and the isometric factor on $\mathbb{S}^1$ does not contribute to the topological entropy, so any such product is a measure of maximal entropy for $f$.

Although the above example works for $A$ an Anosov toral endomorphism, for non-invertible systems with some kind of hyperbolicity this problem has been much less explored in its generality.

A \textit{uniformly hyperbolic endomorphism} (also called \textit{Anosov endomorphism)} is a generalization of the concepts of uniformly hyperbolic diffeomorphism and expanding map, being hyperbolic but not necessarily invertible. Anosov endomorphisms in general are not structurally stable, since any two nearby maps may have different amounts of unstable manifolds for corresponding points, which obstructs the existence of a conjugacy \cite{mane1975stability, przytycki1976anosov}. Nonetheless, their dynamics can be studied in spaces of orbits where a kind of stability holds, and it can be shown in this way that they are intrinsically ergodic, see Section \ref{sec:preli} for more details.

Partially hyperbolic endomorphisms are the non-invertible generalization of partial hyperbolicity, as we define on Section \ref{sec:preli}. There are examples of partially hyperbolic endomorphisms that are not intrinsically ergodic on the product $\mathbb{T}^n \times I$, having an expanding map as a factor on $\mathbb{T}^n$ \cite{kan1994, nunezramirezvasquez}. The example $f \times Id: \mathbb{T}^n \times \mathbb{S}^1 \to \mathbb{T}^n \times \mathbb{S}^1$, with $f$ an Anosov endomorphism, has non-hyperbolic linearization and infinitely many measures of maximal entropy. But the intrinsic ergodicity for partially hyperbolic endomorphisms with hyperbolic linearization presenting contracting directions was not yet explored, to the best of our knowledge.

Our first effort is to guarantee a general result of existence of equilibrium states in the case that the central direction is one-dimensional, as the result by W. Cowieson and L.-S. Young in \cite{cowiesonyoung} for the invertible case.

\begin{manualtheorem}{A}
    \label{teoA}
    For $M$ a closed Riemannian manifold, if $f: M \to M$ is a $C^1$ partially hyperbolic endomorphism with one-dimensional center bundle, then there is an equilibrium state for $(f, \phi)$, for any continuous potential $\phi: M \to \mathbb{R}$.
\end{manualtheorem}

Observe that in the above result we do not require $f$ to be absolutely partially hyperbolic (see Definition \ref{def:abs-ph}), since we do not use any finer estimate of the derivatives. Neither we require $f$ to be dynamically coherent (see Definition \ref{def:dyn-cohe}), because the central direction is one dimensional, which implies that the central bundle (we can define a bundle over connected components of the natural extension space, see Subsection \ref{sebsec:inverse-limit}) is integrable, even if it is not necessarily uniquely integrable.

We then explore a case in the torus for which uniqueness of the measure of maximal entropy can be established, with an approach inspired by \cite{buzzifishersambarinovasquez2012, ures2012intrinsic}. 



\begin{manualtheorem}{B}
    \label{teoB}
    Let $f: \mathbb{T}^n \to \mathbb{T}^n$ be an absolutely partially hyperbolic endomorphism that is dynamically coherent, with $\dim (E^c) = 1$, and with $W^\sigma_F$ (for $\sigma \in \{u,c,s\}$) quasi-isometric, where $F: \mathbb{R}^n \to \mathbb{R}^n$ is a lift of $f$. If $A = f_*$ is hyperbolic and $A$ is a factor of $f$, then $f$ has a unique measure of maximal entropy $\mu$. Moreover, $(f, \mu)$ and $(A, m)$ are ergodically equivalent.
\end{manualtheorem}


In the invertible setting, the work of M. Brin \cite{brin2003dynamical} gives us that the quasi-isometry of strong foliations guarantees dynamical coherence for absolutely partially hyperbolic diffeomorphisms. Even if this is true on our non-invertible context, we still need absolute partial hyperbolicity in the proof of Lemma \ref{lem:lemma1}. L. Hall and A. Hammerlindl have proved that partially hyperbolic surface endomorphisms with hyperbolic linearization are dynamically coherent and their foliations by unstable and central leaves in the universal cover are quasi-isometric \cite{hall2021partially}. Then, Theorem \ref{teoB} has a simpler formulation on $\mathbb{T}^2$. Observe that in this case the hypothesis on absolute partial hyperbolicity can be dropped, since Lemma \ref{lem:lemma1} is trivially valid for $A: \mathbb{T}^2 \to \mathbb{T}^2$.

\begin{manualtheorem}{C}
\label{teoC}
Let $f: \mathbb{T}^2 \to \mathbb{T}^2$ be a partially hyperbolic endomorphism with hyperbolic linearization $A$. If $A$ is a factor of $f$, then $f$ has a unique measure of maximal entropy $\mu$, and $(f, \mu)$ is ergodically equivalent to $(A, m)$.      
\end{manualtheorem}

Let $f: \mathbb{T}^n \to \mathbb{T}^n$ be a partially hyperbolic endomorphism with hyperbolic linearization $A$. Then a lift $F: \mathbb{R}^n \to \mathbb{R}^n$ of $f$ to the universal cover has $A: \mathbb{R}^n \to \mathbb{R}^n$ as a factor --- where $A$ is the unique linear map that projects to $A: \mathbb{T}^n \to \mathbb{T}^n$, and we use the same notation for both maps. In other words, there is a continuous and surjective map $H: \mathbb{R}^n \to \mathbb{R}^n$ such that $H \circ F = A \circ H$. It remains to explore conditions for this semiconjugacy to descend to $\mathbb{T}^n$. In the uniformly hyperbolic case, $H$ is a conjugacy, and it projects to $\mathbb{T}^n$ if and only if $f$ has a unique unstable manifold for each point \cite{sumi1994linearization}. But there is no similar result for the partial hyperbolic case.

A more general result can be obtained in Theorem \ref{teoD} for a specific class of potentials, the ones that are written as $\phi \circ h$, with $\phi: \mathbb{T}^n \to \mathbb{R}$ continuous and $h: \mathbb{T}^n \to \mathbb{T}^n$ being the semiconjugacy that makes $A$ a factor of $f$. For this, we require that the corresponding equilibrium state $\mu$ for $(A, \phi)$ gives measure $0$ to $\{ z \in \mathbb{T}^n \; : \; \# h^{-1}(z) > 1 \}$, as in \cite[Theorem A]{crisostomo-tahzibi2019}.

\begin{manualtheorem}{D}
    \label{teoD}
    Let $f: \mathbb{T}^n \to \mathbb{T}^n$ be an absolutely partially hyperbolic endomorphism that is dynamically coherent, with $\dim (E^c) = 1$, and with $W^\sigma_F$ (for $\sigma \in \{u,c,s\}$) quasi-isometric, where $F: \mathbb{R}^n \to \mathbb{R}^n$ is a lift of $f$. Consider a continuous potential $\phi: \mathbb{T}^n \to \mathbb{R}$ and $\mu$ the unique equilibrium state for $(A, \phi)$. If $A = f_*$ is hyperbolic, $A$ is a factor of $f$ with semiconjugacy $h$, and
    $$\mu\{ z \in \mathbb{T}^n \; : \; \# h^{-1}(z) > 1 \} = 0,$$
    then $f$ has a unique equilibrium state $\hat{\mu}$ for $(f, \phi \circ h)$. Moreover, $(f, \hat{\mu})$ and $(A, \mu)$ are ergodically equivalent.
\end{manualtheorem}

As before, the above result holds for partially hyperbolic endomorphisms with hyperbolic linearization in $\mathbb{T}^2$, without requiring the additional hypotheses that we use on the higher dimensional setting. The proof of Theorem \ref{teoD} runs exactly as the one for Theorem \ref{teoB} after Lemma \ref{lem:lemma3}, which is replaced by the hypothesis on $\{ z \in \mathbb{T}^n \; : \; \# h^{-1}(z) > 1 \}$.

Further results can be explored by the investigation of conditions under which we can obtain dynamical coherence and quasi-isometry of foliations for general partially hyperbolic endomorphisms. Another possibility would be to consider the case in which $\dim (E^c) > 1$.

\subsection*{Organization of the paper}

Section \ref{sec:preli} is dedicated to summarize concepts that give context and tools for our results. In Section \ref{sec:teoA} we give the proof of the existence of equilibrium states by proving that, under the hypotheses of Theorem \ref{teoA}, $f$ is $h$-expansive. We address the proof of uniqueness on Section \ref{sec:teoB}.

\section{Preliminary concepts}
\label{sec:preli}

In this section we introduce some necessary concepts and results for our proofs. Here $M$ is an closed smooth manifold (compact, connected and without boundary), and $(X,d)$ is a compact metric space.

\subsection{Equilibrium states}

We now review a few facts on entropy and equilibrium states. We start this section with an alternative definition of metric entropy.
\vspace{0.1cm}

Let $f:X\to X$ be a continuous map and $\mu$ an ergodic $f$-invariant probability measure. We define the $d_{n}$ metric over $X$ as $$d_{n}(x,y):=\displaystyle\max_{0\leq i\leq n-1}d(f^{i}(x), f^{i}(y)).$$

For $\delta\in (0,1)$, $n\in \mathbb{N}$ and $\epsilon>0$, a finite set $E\subset X$ is called $(n,\epsilon,\delta)$-\textit{spanning} if the union of the $\epsilon$-balls $B^{n}_{\epsilon}(x)=\{y\in X: d_{n}(x,y)<\epsilon\}$, centered at points $x\in E$, has $\mu$-measure greater than $1-\delta$. The \textit{metric entropy} is defined by
$$h_{\mu}(f)=\displaystyle\lim_{\epsilon \rightarrow 0}\displaystyle\limsup_{n\rightarrow \infty}\frac{1}{n}\log \left( \min\{\# E: E\subseteq X \ {\rm is \ }(n,\epsilon,\delta)-\rm spanning \} \right).$$

Let $K\subset X$ be a non-empty compact set. A set $E\subseteq K$ is said to be $(n,\epsilon)$-\textit{spanning} if $K$ is covered by the union of the $\epsilon$-balls centered at points of $E$. The \textit{topological entropy} of $f$ on $K$ is defined by

$$h(f,K)=\displaystyle\lim_{\epsilon \rightarrow 0}\displaystyle\limsup_{n\rightarrow \infty}\frac{1}{n}\log \left( \min\{\# E: E\subseteq K \ \mbox{\rm{is}} \ (n,\epsilon)-\mbox{\rm{spanning}} \}\right).$$
We denote $h_{top}(f):=h(f,X).$

The classical definition of metric entropy can be seen, for instance, on \cite[Chapter 9]{viana_oliveira_2016}, and the above characterization is for ergodic measures on compact metric spaces.

Dinaburg's variational principle establishes a relation between metric entropy and topological entropy. It states that $$\sup \{h_{\mu}(f):\mu\in \mathcal{M}(X,f)\}=\sup \{h_{\mu}(f):\mu\in \mathcal{M}_{e}(X,f)\}=h_{top}(f),$$ where $\mathcal{M}(X,f)$ denotes the set of $f$-invariant Borel probability measures on $X$ and $\mathcal{M}_{e}(X,f)$ denotes the set of ergodic measures on $X$.

\begin{definition}
A \textit{measure of maximal entropy} is a probability measure $\mu\in \mathcal{M}(X,f)$ such that $h_{\mu}(f)=h_{top}(f).$   
\end{definition}

A generalization of the concept of entropy is given by \textit{(topological) pressure}. We define the \textit{pressure} of $f$ with respect to the continuous \textit{potential} $\phi: M \to \mathbb{R}$ by 
$$P(f,\phi)=\displaystyle\lim_{\epsilon \rightarrow 0}\displaystyle\limsup_{n\rightarrow \infty}\frac{1}{n}\log \left( \inf \left\{\sum_{x \in E}e^{\phi_n(x)}: E\subseteq M \ \mbox{\rm{is}} \ (n,\epsilon)-\mbox{\rm{spanning}} \right\}\right),$$
where $\phi_n(x) = \sum_{i = 0}^{n-1} \phi \circ f^i$. And Walter's variational principle gives us that
$$P(f,\phi) = \sup_{\mu \in \mathcal{M}(X,f)} \left\{h_{\mu}(f) + \int_M \phi d\mu \right\}.$$

\begin{definition}
An \textit{equilibrium state} for the pair $(f, \phi)$ is a probability measure $\mu\in \mathcal{M}(X,f)$ such that $P(f,\phi) = h_{\mu}(f) + \int_M \phi d\mu.$  
\end{definition}

In general, establishing the existence of equilibrium states is a non-trivial task, but in certain contexts there are results that facilitate this task. For instance, when the pressure function $P_\phi:\mathcal{M}(X,f)\to [0,\infty)$ defined by $P_\phi(\mu):= h_{\mu}(f) + \int_M \phi d\mu$ is upper semi-continuous, $f$ admits an equilibrium state with respect to $\phi$.

Let $f:X\to X$ be a homeomorphism. The \textit{bi-infinite Bowen ball} around $x\in X$ of size $\epsilon>0$ is the set $$\Gamma_{\epsilon}(x):=\{y\in X: d(f^{n}(x),f^{n}(y))<\epsilon \ \mbox{\rm{for all}} \ n\in \mathbb{Z}\}.$$

We say that $f$ is \textit{expansive} if there is a constant $\epsilon>0$ such that $\Gamma_{\epsilon}(x)=\{x\}$ for all $x\in X.$ 

To generalize the concept of expansiveness, consider 
$$\phi_\epsilon(x) := \bigcap_{n =1}^\infty B^n_\epsilon(x).$$

\begin{definition}
A continuous map $f:X\to X$ is called:
\begin{enumerate}
    \item \textit{$h$-expansive} if $h^{\ast}_{f}(\epsilon):=\sup_{x\in M}h(f,\phi_{\epsilon}(x))=0;$
    \item \textit{asymptotically $h$-expansive} if  $\displaystyle\lim_{\epsilon\to 0}h^{\ast}_{f}(\epsilon)=0.$
\end{enumerate}
\end{definition}

Of course $h$-expansiveness implies asymptotic $h$-expansiveness. It is also easy to check that, if $f$ is a homeomorphism, then $\Gamma_\epsilon(x) \subseteq \phi_\epsilon(x)$ and by \cite[Corollary 2.3]{bowen-h-expansive1972}, $h^{\ast}_{f}(\epsilon) = h^{\ast}_{f, homeo}(\epsilon) :=\sup_{x\in M}h(f,\Gamma_{\epsilon}(x))$. In particular, if $f$ is expansive, then it is $h$-expansive.

\begin{theorem}[\cite{misiurewicz1973diffeomorphism}]\label{emme}
If $f:X\to X$ is asymptotically $h$-expansive, then the entropy function is upper semi-continuous. In particular, $f$ admits an equilibrium state for any continuous potential. 
\end{theorem}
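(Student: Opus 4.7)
The plan is to deduce upper semi-continuity of $\mu \mapsto h_\mu(f)$ from a partition-based inequality controlled by $h^*_f(\epsilon)$, and then invoke weak-$*$ compactness of $\mathcal{M}(X,f)$ to obtain existence of a measure of maximal entropy.

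\textbf{Step 1 (the key inequality).} I would first establish that, for every finite measurable partition $\alpha$ of $X$ with $\operatorname{diam}(\alpha)<\epsilon$ and every $\mu\in\mathcal{M}(X,f)$,
\[
h_\mu(f) \;\leq\; h_\mu(f,\alpha) \;+\; h^{*}_f(\epsilon).
\]
The idea is to compare an arbitrary finite partition $\beta$ with the forward-orbit $\sigma$-algebra $\alpha^{+}_{\infty}:=\bigvee_{n\geq 0} f^{-n}\alpha$ using the Abramov/Rokhlin-style identity $h_\mu(f,\beta)\leq h_\mu(f,\alpha)+h_\mu(f,\beta\mid\alpha^{+}_{\infty})$. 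The atoms of $\bigvee_{i=0}^{n-1} f^{-i}\alpha$ have $f$-diameter at most $\epsilon$ in the $d_n$-metric, so each atom sits inside some Bowen ball $B^n_\epsilon(x)$, and therefore the atoms of $\alpha^{+}_{\infty}$ lie in the sets $\phi_\epsilon(x)$. The conditional entropy term is then bounded, via a standard Katok-style counting argument applied fiberwise, by $\sup_{x} h(f,\phi_\epsilon(x))=h^{*}_f(\epsilon)$. Supremizing over $\beta$ gives the claim.

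\textbf{Step 2 (upper semi-continuity for fixed partitions).} I would use the classical fact that if $\alpha$ is a finite partition with $\mu(\partial\alpha)=0$, then $\nu\mapsto h_\nu(f,\alpha)$ is upper semi-continuous at $\mu$: the measures of the atoms of $\bigvee_{i=0}^{n-1} f^{-i}\alpha$ depend continuously on $\nu$ at such $\mu$, and $-\sum p\log p$ is continuous in the vector of probabilities, so $\frac{1}{n}H_\nu(\bigvee_{i=0}^{n-1} f^{-i}\alpha)$ converges uniformly enough to make the $\inf_n$ upper semi-continuous.

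\textbf{Step 3 (combining).} Fix $\mu$ and $\tau>0$. By asymptotic $h$-expansiveness pick $\epsilon>0$ with $h^{*}_f(\epsilon)<\tau/2$, and then a partition $\alpha$ with $\operatorname{diam}(\alpha)<\epsilon$ and $\mu(\partial\alpha)=0$ (such $\alpha$ exists because, for each $x$, the set of $r>0$ with $\mu(\partial B_r(x))>0$ is at most countable, so one can build $\alpha$ from small balls with $\mu$-null boundaries). For $\nu$ close enough to $\mu$ in the weak-$*$ topology, Step 2 gives $h_\nu(f,\alpha)<h_\mu(f,\alpha)+\tau/2\leq h_\mu(f)+\tau/2$, and Step 1 applied to $\nu$ yields
\[
h_\nu(f)\;\leq\; h_\nu(f,\alpha)+h^{*}_f(\epsilon)\;<\; h_\mu(f)+\tau.
\]
This proves that $h$ is upper semi-continuous on $\mathcal{M}(X,f)$. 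Since $\mathcal{M}(X,f)$ is weak-$*$ compact, an upper semi-continuous function attains its supremum, and by the variational principle that supremum equals $h_{top}(f)$; the maximizer is a measure of maximal entropy, as stated in \cite[Lemma~10.5.8]{viana_oliveira_2016}.

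\textbf{Main obstacle.} The technical heart is Step 1: converting the geometric fact that atoms of the forward-refined partition lie inside the sets $\phi_\epsilon(x)$ into a genuine bound on the conditional entropy $h_\mu(f,\beta\mid\alpha^{+}_{\infty})$ by $h^{*}_f(\epsilon)$. This requires either a disintegration of $\mu$ along $\alpha^{+}_{\infty}$ together with a uniform bound on the number of $(n,\delta)$-separated points in each fiber, or an analogous generator-type argument; it is the only step that truly uses the hypothesis, whereas the rest is soft functional analysis on $\mathcal{M}(X,f)$.
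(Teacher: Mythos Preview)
The paper does not give its own proof of this theorem: it is stated with the citation \cite{misiurewicz1973diffeomorphism} and used as a black box, so there is nothing in the paper to compare your argument against.

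That said, your outline is the standard route to Misiurewicz's result and is essentially correct. A couple of points worth tightening. In Step~1, the inequality you want is usually obtained via
\[
h_\mu(f,\beta)\;\leq\;h_\mu(f,\alpha\vee\beta)\;=\;h_\mu(f,\alpha)+h_\mu\bigl(f,\beta\,\big|\,\alpha^{+}_{\infty}\bigr),
\]
so the task is precisely to bound $h_\mu(f,\beta\mid\alpha^{+}_{\infty})$ by $h^{*}_f(\epsilon)$. The ``Katok-style counting argument applied fiberwise'' you invoke is the right intuition but is not quite a one-liner: one needs to pass from the topological quantity $h(f,\phi_\epsilon(x))$ (a growth rate of spanning/separated sets on a single compact set) to a bound on $\frac{1}{n}H_\mu(\beta_0^{\,n-1}\mid\alpha^{+}_{\infty})$, which involves averaging over atoms and controlling the number of $\beta_0^{\,n-1}$-atoms meeting each $\alpha_0^{\,n-1}$-atom. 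Misiurewicz does this via his notion of topological conditional entropy; Bowen's \cite{bowen-h-expansive1972} contains the relevant combinatorics. You have correctly flagged this as the only substantive step.

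In Step~2, the claim that $\nu\mapsto h_\nu(f,\alpha)$ is upper semi-continuous at $\mu$ when $\mu(\partial\alpha)=0$ is standard, but note it is upper semi-continuity \emph{at the specific point} $\mu$, not globally; your Step~3 uses it only at $\mu$, so this is fine. Also, in Step~3 you need $\nu(\partial\alpha)=0$ is \emph{not} required---only $\mu(\partial\alpha)=0$---since continuity of $\nu\mapsto \nu(A)$ for $A$ with $\mu(\partial A)=0$ holds at $\mu$ regardless of $\nu$. Your write-up already handles this correctly.
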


Uniqueness of equilibrium states is a much more delicate problem, but it also brings interesting properties. For instance, when such measures are unique, they are ergodic. Some classes of maps are known to have a unique equilibrium state. For expansive homeomorphisms with specification property, for instance, this problem was solved for Hölder continuous potentials by R. Bowen in \cite{bowen}.

Following B. Weiss \cite{weiss1970}, we call a system \textit{intrinsically ergodic} if there is a unique measure of maximal entropy. Our main tool to prove intrinsic ergodicity is the following theorem, known as Ledrappier--Walters' formula.




\begin{theorem}{\cite[Theorem 2.1]{ledrappier1977relativised}}\label{principiolw}
   Let $X$ and $Y$ be compact metric spaces and consider $T: X \to X$, $S: Y \to Y$ and $h: X \to Y$ continuous maps with $h$ surjective and such that $S \circ h = h \circ T$. If $\phi: X \to X$ is continuous and $\nu$ is an $S$-invariant probability measure, then      $$\sup\{h_\mu(T | S) + \int \phi d \mu \; : \; \mu \in \mathcal{M}(X, T) \mbox{ and } \mu \circ \pi^{-1} = \nu \} = \int_Y P(T,\phi,h^{-1}(y)) d\nu(y).$$     In particular, if $\phi=0$, we have that
   $$\sup\{h_\mu(T) \; : \; \mu \in \mathcal{M}(X, T) \mbox{ and } \mu \circ h^{-1} = \nu \} = h_\nu(S) + \int_Y h_{top}(T,h^{-1}(y)) d\nu(y).$$
\end{theorem}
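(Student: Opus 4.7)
The plan is to prove the Ledrappier--Walters formula by a fiber-wise adaptation of the classical Misiurewicz-style proof of the variational principle. Throughout, write $\pi:X\to Y$ for the factor map (the statement uses $h$ for both the factor map and metric entropy; I will use $\pi$ for the former). Introduce the relative entropy $h_\mu(T\mid S) = h_\mu(T, \pi^{-1}\mathcal{B}_Y)$ (the conditional entropy of $T$ given the sub-$\sigma$-algebra pulled back from $Y$), which by Abramov--Rokhlin equals $h_\mu(T) - h_\nu(S)$ when $\pi_*\mu=\nu$. Define the fiber pressure $P(T,f,\pi^{-1}(y))$ via $(n,\varepsilon)$-separated (or spanning) subsets of $\pi^{-1}(y)$ weighted by $e^{S_n f}$, in the standard way. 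I would then prove the two inequalities separately.

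\textbf{Upper bound.} For a finite measurable partition $\mathcal{P}$ of $X$ with small diameter, apply the elementary inequality
\begin{equation*}
H_\mu(\mathcal{P}^n \mid \pi^{-1}\mathcal{B}_Y)(y) + \int_{\pi^{-1}(y)} S_n f \, d\mu_y \;\le\; \log \sum_{P\in\mathcal{P}^n,\; P\cap\pi^{-1}(y)\neq\emptyset} \sup_{P\cap\pi^{-1}(y)} e^{S_n f},
\end{equation*}
where $\mu_y$ is the disintegration of $\mu$ over the fiber. Integrate against $\nu$, divide by $n$, and use $T$-invariance of $\mu$ so that the Birkhoff sum integral collapses to $n\mu(f)$. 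Letting $n\to\infty$ gives
\begin{equation*}
h_\mu(T,\mathcal{P}\mid \pi^{-1}\mathcal{B}_Y) + \mu(f) \le \int_Y P_\mathcal{P}(T,f,\pi^{-1}(y))\,d\nu(y),
\end{equation*}
and refining $\mathcal{P}$ to have vanishing diameter produces the full relative entropy on the left and the fiber pressure on the right.

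\textbf{Lower bound.} This is the delicate direction. For each $n$ and a fixed $\varepsilon>0$, I would use a measurable selection theorem to choose, for $\nu$-a.e.\ $y\in Y$, a maximal $(n,\varepsilon)$-separated set $E_n(y)\subset \pi^{-1}(y)$ whose weighted sum $Z_n(y) := \sum_{x\in E_n(y)} e^{S_n f(x)}$ is within a factor of $2$ of $\exp P_n(T,f,\pi^{-1}(y),\varepsilon)$, with $y\mapsto E_n(y)$ varying measurably. Form the fiber-wise Gibbs-type probabilities $\sigma_n^{\,y} = Z_n(y)^{-1}\sum_{x\in E_n(y)} e^{S_n f(x)}\delta_x$, assemble them against $\nu$ to get a probability $\sigma_n$ on $X$ with $\pi_*\sigma_n=\nu$, and time-average: $\mu_n := \frac{1}{n}\sum_{i=0}^{n-1} T^i_*\sigma_n$. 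Any weak-$*$ accumulation point $\mu$ is $T$-invariant; because $\nu$ is $S$-invariant and $S\circ\pi=\pi\circ T$, the relation $\pi_*\mu=\nu$ is preserved in the limit. Apply the Misiurewicz--Bowen partition estimate (now in the relative setting, using partitions whose atoms have small fiber diameter) to conclude
\begin{equation*}
h_\mu(T\mid S) + \mu(f) \;\ge\; \int_Y P(T,f,\pi^{-1}(y))\,d\nu(y) - \eta,
\end{equation*}
for arbitrary $\eta>0$, giving the reverse inequality. Specializing to $f\equiv 0$ recovers the second displayed equation.

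\textbf{Main obstacle.} The crux is the lower bound, and within it the measurable construction of the fiber-wise near-maximizing separated sets together with the verification that $\pi_*\mu=\nu$ survives the Cesàro averaging and weak-$*$ limit. The selection requires a Kuratowski--Ryll-Nardzewski style argument applied to the upper semi-continuous multifunction $y\mapsto \pi^{-1}(y)$, and some care is needed because fibers can vary discontinuously in Hausdorff distance. Once measurability is secured, the rest is an essentially standard (but relative) Misiurewicz argument, with the partition-diameter refinement step requiring small atoms \emph{in the fiber direction}, which is the technical ingredient that distinguishes this from the non-relative variational principle.
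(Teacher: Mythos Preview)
The paper does not prove this statement. Theorem~\ref{principiolw} is quoted verbatim from \cite[Theorem 2.1]{ledrappier1977relativised} as a preliminary tool and is used as a black box in Sections~\ref{sec:teoB} and~\ref{sec:teoC}; there is no proof of it anywhere in the paper. So there is nothing to compare your proposal against here.

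That said, your outline is essentially the strategy of the original Ledrappier--Walters paper: a relativised Misiurewicz argument, with the upper bound coming from a fiberwise partition/Jensen estimate and the lower bound from a measurable selection of near-optimal $(n,\varepsilon)$-separated sets in the fibers, assembled into measures that are Ces\`aro-averaged and passed to a weak-$*$ limit. Two small cautions if you intend to flesh this out. First, the Abramov--Rokhlin identity $h_\mu(T\mid S)=h_\mu(T)-h_\nu(S)$ that you invoke for the ``in particular'' clause requires $h_\nu(S)<\infty$; this is not automatic for a continuous map on a compact space, so you should either assume it or note that the formula is otherwise understood with the convention $\infty-\infty$. Second, in the lower bound the passage from the partition estimate to the full relative entropy needs partitions whose boundaries have $\mu$-measure zero (so that the entropy lower-semicontinuity survives the weak-$*$ limit), and the refinement must be carried out against a limit measure that you do not know in advance; the standard trick is to fix one accumulation point first and then choose a partition adapted to it. These are routine but are exactly the places where a sketch can silently break.
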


\subsection{Natural extensions}
\label{sebsec:inverse-limit}

For certain aspects, we need to analyze the past orbit of a point. If the map is not invertible, every point has more than one preimage, and there are several ``choices of past''. We can make each one of these choices a point on a new space, defined as follows.

\begin{definition}
Given $f: X \to X$ continuous, the \emph{natural extension} (or \emph{inverse limit space}) associated to the triple $X$, $d$ and $f$ is
\begin{itemize}
    \item $X_f = \{\Tilde{x} = (x_k) \in X^\mathbb{Z}: x_{k+1} = f(x_k) \mbox{, } \forall k \in \mathbb{Z}\}$,
    \item $(\Tilde{f}(\Tilde{x}))_k = x_{k+1}$ $\forall k \in \mathbb{Z}$ and $\forall \Tilde{x} \in X_f$, 
    \item $\Tilde{d}(\Tilde{x}, \Tilde{y}) = \sum\limits_k \dfrac{d(x_k, y_k)}{2^{|k|}}$.
\end{itemize}
\end{definition}

We also denote $(X_f, \tilde{f})$ as $\varprojlim(X,f)$. We have that $(X_f,\Tilde{d})$ is a compact metric space and the shift map $\Tilde{f}$ is continuous and invertible. Let $\pi: X_f \to X$ be the projection on the 0th coordinate, $\pi(\tilde{x}) = x_0$, then $\pi$ is a continuous surjection and $f \circ \pi = \pi \circ \tilde{f}$. Therefore, every non-invertible topological dynamical system on a compact metric space is a topological factor of an invertible topological dynamical system on a compact metric space.

With the metric $\tilde{d}$ over $X_f$, we can define precisely the continuity of objects that depend on the orbit of a point, such as the invariant manifolds given by the invariant splittings in the definitions of the next subsection. 

Even when $X = M$ is a manifold, the space of natural extension does not have a manifold structure, but we can still pull back the tangent structure of the original manifold to the natural extension. By doing so, we have a manifold structure on each connected component of $M_f$, which is diffeomorphic to the universal covering $\Tilde{M}$ of $M$. On each one of these connected components, we have unstable and stable foliations. Additionally, if $\dim E^c = 1$ there are central curves tangent to $E^c$ restricted to this space.

By making use of the inverse limit space, it is also possible to better comprehend invariant measures for non-invertible systems. Unless stated otherwise, all measures on this work are over the Borel $\sigma$-algebra on the given space. For any $\tilde{f}$-invariant probability measure $\tilde{\mu}$ on $X_f$, we obtain $\pi_* \tilde{\mu}$ as a probability measure on $X$ that is $f$-invariant. What makes the natural extension the natural construction to study ergodic theory of non-invertible systems is the fact that $\pi_*$ is actually a bijection between the invariant probabilities for $(X_f, \tilde{f})$ and $(X,f)$, as stated in \cite[Proposition I.3.1]{qian2009smooth}.

\begin{proposition}[\cite{qian2009smooth}]
    \label{prop:corresp-measures}
    Let $(X,d)$ be a compact metric space and $f: X \to X$ continuous. For any $f$-invariant probability measure $\mu$ on $X$, there is a unique $\tilde{f}$-invariant probability measure $\tilde{\mu}$ on $X_f$ such that $\pi_* \tilde{\mu} = \mu$.
\end{proposition}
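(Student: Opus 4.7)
The plan is to identify $\tilde{\mu}$ through its finite-dimensional marginals on the coordinate projections $\pi_{[-n,n]}:X_f\to X^{2n+1}$, $\tilde{x}\mapsto(x_{-n},\ldots,x_n)$. The central observation is that in $X_f$ every coordinate with index $k\geq -n$ is forced by $x_{-n}$ via $x_k=f^{k+n}(x_{-n})$, so the joint distribution of $(x_{-n},\ldots,x_n)$ under any $\tilde{f}$-invariant lift of $\mu$ is completely determined by the distribution of $x_{-n}$, which $\tilde{f}$-invariance combined with $\pi_*\tilde{\mu}=\mu$ forces to be $\mu$ itself.

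For uniqueness, I would first note that $\pi_k := \pi \circ \tilde{f}^k$ satisfies $(\pi_k)_*\tilde{\mu} = \pi_*(\tilde{f}^k)_*\tilde{\mu} = \mu$ for all $k\in\mathbb{Z}$. Then for any cylindrical $\phi\in C(X_f)$ depending only on coordinates $\{-n,\ldots,n\}$, applying $\tilde{f}^n$-invariance rewrites $\int \phi\,d\tilde{\mu}$ as an integral depending only on coordinates $\{0,\ldots,2n\}$, which by the orbit relation equals $\int_X \phi(y,f(y),\ldots,f^{2n}(y))\,d\mu(y)$. Since cylindrical continuous functions are dense in $C(X_f)$ by Stone--Weierstrass, this formula determines $\tilde{\mu}$.

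For existence, I would define candidate marginals $\mu_n$ on $X^{2n+1}$ as the pushforward of $\mu$ under $y\mapsto(y,f(y),\ldots,f^{2n}(y))$ and verify the Kolmogorov compatibility condition, namely that the marginal of $\mu_{n+1}$ on the inner $2n+1$ coordinates equals $\mu_n$. This is precisely where $f$-invariance of $\mu$ enters: after a change of variables $y'=f(y)$, the compatibility reduces to $f_*\mu=\mu$. Kolmogorov's extension theorem, applied on the compact metrizable space $X^{\mathbb{Z}}$, then produces a Borel probability measure $\tilde{\mu}$ with the prescribed marginals, and since each $\mu_n$ is supported on the orbit relation $\{y_{k+1}=f(y_k)\}$, the measure $\tilde{\mu}$ is concentrated on $X_f$.

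Finally, I would verify the two desired properties: $\pi_*\tilde{\mu}=\mu$ is immediate from the $n=0$ marginal, while $\tilde{f}$-invariance follows by checking that $\tilde{f}_*\tilde{\mu}$ and $\tilde{\mu}$ share the same finite-dimensional marginals via the same shift-and-reindex computation used for uniqueness. The main obstacle I expect is the Kolmogorov consistency check, which is exactly the place where $f$-invariance of $\mu$ is needed; once it is in hand, everything else is either bookkeeping or an invocation of the standard extension theorem.
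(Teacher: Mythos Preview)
Your argument is correct and is the standard construction of the lifted measure on the natural extension via Kolmogorov's consistency theorem; the uniqueness part via cylindrical functions and Stone--Weierstrass is also sound. However, there is nothing to compare here: the paper does not supply a proof of this proposition at all, it merely quotes it from \cite{qian2009smooth} (Proposition~I.3.1) as a known background fact. So your proposal goes beyond what the paper itself does, but it is entirely in line with how that reference establishes the result.
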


Moreover, we have that the metric entropies for corresponding measures are the same \cite[Proposition I.3.4]{qian2009smooth}, that is, $h_{\mu}(f) = h_{\tilde{\mu}}(\tilde{f})$. Thus, by the variational principle, the topological entropy of $(X_f, \tilde{f})$ and $(X,f)$ is the same.

An Anosov endomorphism is topologically conjugate to its linearization at the natural extension level \cite[Theorem 1.20]{przytycki1976anosov}, so their natural extensions have the same topological entropy and corresponding invariant Borel probability measures. Thus, by the invariance of entropy between a system and its natural extension, Anosov endomorphisms are intrinsically ergodic. We investigate this property for partially hyperbolic endomorphisms.

\subsection{Partially hyperbolic endomorphisms}

Partial hyperbolicity means that the dynamics presents directions with hyperbolicity dominating a central direction. On our context, we have $f: M \to M$ a local diffeomorphism, and we do not have a global invariant splitting in general. Indeed, even if $E^c_x$ is trivial ($f$ is an Anosov endomorphism), the case in which there is an invariant splitting --- which we call \textit{special} --- is not robust \cite{przytycki1976anosov}. So the definition cannot be made by using a global invariant splitting. We can define partial hyperbolicity for splittings along given $f$-orbits or using invariant cones.

\begin{definition}
\label{def:abs-ph}
We say that a $C^1$ local diffeomorphism $f: M \to M$ is \textit{absolutely partially hyperbolic} if, for any $f$-orbit $\tilde{x} \in M_f$, we have a splitting $T_{x_i}M = E^u_{x_i} \oplus E^c_{x_i} \oplus E^s_{x_i}$, $i \in \mathbb{Z}$, such that
\begin{enumerate}
    \item it is $Df$-invariant: $Df_{x_i}E^\sigma_{x_i} = E^\sigma_{x_{i+1}}$, $\sigma \in \{u, c, s\}$, for all $i \in \mathbb{Z}$;
    \item if $\tilde{x}, \tilde{y}, \tilde{z} \in M_f$, and $v^s \in E^s_{x_i}$, $v^c \in E^c_{y_j}$ and $v^u \in E^u_{z_k}$ are unit vectors, $i, j, k \in \mathbb{Z}$, then
$$\Vert Df_{x_i} v^s \Vert < \Vert Df_{y_j} v^c \Vert < \Vert Df_{z_k} v^u \Vert.$$
Moreover, $\Vert \restr{Df}{E^s_{x_i}} \Vert < 1$, $\Vert \restr{Df}{E^u_{x_i}} \Vert > 1$ for any $\tilde{x} \in M_f$ and $i \in \mathbb{Z}$.
\end{enumerate}
\end{definition}

We say that $f$ is \textit{(weakly) partially hyperbolic} if the inequality $\Vert Df_{x} v^s \Vert < \Vert Df_{x} v^c \Vert< \Vert Df_{x} v^u \Vert$ on the above definition holds for unit $v^\sigma \in E^\sigma_x$, $\sigma \in \{u, c, s\}$, that is, we can make this comparison just for vectors on the same tangent space.

For endomorphisms, we may have that: $E^c$ is trivial, obtaining an Anosov endomorphism with contracting direction; $E^s$ is trivial; or $E^s$ and $E^u$ are both trivial, obtaining an expanding map. Absolutely partially hyperbolic endomorphisms then generalize Anosov endomorphisms (which in turn generalize expanding maps and Anosov diffeomorphisms) and absolutely partially hyperbolic diffeomorphisms (the invertible case).

To see that the class of partially hyperbolic endomorphisms is open in $C^1(M, M)$, for instance, it is convenient to work with an alternative definition. For this, we consider a cone family as $\mathcal{C} = \{\mathcal{C}(x)\}_{x \in M} \subseteq TM$ with each $\mathcal{C}(x) \subseteq T_xM$ being a cone. We say that $\mathcal{C}$ is \textit{$Df$-invariant} if $Df_x(\mathcal{C}(x)) \subseteq \textrm{Int } \mathcal{C}(f(x))$ and \textit{$Df^{-1}$-invariant} if $Df^{-1}_{f(x)}(\mathcal{C}(f(x)) \subseteq \textrm{Int } \mathcal{C}(x)$, where $Df^{-1}_{f(x)}: T_{f(x)}M \to T_xM$.

\begin{definition}
We say that a $C^1$ local diffeomorphism $f: M \to M$ is \textit{absolutely partially hyperbolic} if there are $Df$-invariant cone families $\mathcal{C}^u$ and  $\mathcal{C}^{uc}$, $Df^{-1}$-invariant cone families $\mathcal{C}^{cs}$ and  $\mathcal{C}^{s}$, and constants $C>1$, $0 < \lambda < \gamma_1 < \gamma_2 < \mu$, with $\lambda < 1 < \mu$, such that 
\begin{align*}
    \Vert Df_xv \Vert &> \mu \Vert v \Vert \mbox{ for all } v \in\mathcal{C}^u(x);\\
    \Vert Df_xv \Vert &> \gamma_2 \Vert v \Vert \mbox{ for all } v \in\mathcal{C}^{uc}(x);\\
    \Vert Df^{-1}_xv \Vert &> \gamma_1^{-1} \Vert v \Vert \mbox{ for all } v \in\mathcal{C}^{cs}(x);\\
    \Vert Df^{-1}_xv \Vert &> \lambda^{-1} \Vert v \Vert \mbox{ for all } v \in\mathcal{C}^s(x).\\
\end{align*}
\end{definition}

If $M$ is an orientable surface, then the existence of $f: M \to M$ a partially hyperbolic endomorphism implies that $M = \mathbb{T}^2$, since there is a $Df$-invariant line field $E^c \subseteq TM$ transversal to the cone field \cite{hall2021partially}. This direction is described generically with $E^c$, but it could also be contracting or expanding, with less expansion than the cones. A classification of partially hyperbolic endomorphisms on surfaces is given in \cite{hall2022classification}.

We recall that there are local and global unstable/stable manifolds for $f$ in $M$ \cite[Theorem 2.1]{przytycki1976anosov}, with the unstable manifolds depending on the whole orbit $\tilde{x}$. In general, a partially hyperbolic endomorphism $f: M \to M$ does not have a global invariant unstable or central bundle, and these manifolds do not form a foliation. However, if each point has only one unstable/central direction, we say that $f$ is \textit{u/c-special}, and we say that $f$ is \textit{special} if it is both u- and c-special, as it is the case for linear toral endomorphisms.

As the natural extension is an important tool to understand the ergodic properties of endomorphisms, we have that the lift of the endomorphism to the universal cover is the natural way to explore its geometric and differential properties, due to the following result.

\begin{proposition}[\cite{costa-micena2021}]
    Let $\overline{M}$ be the universal cover of $M$ and $F: \overline{M} \to \overline{M}$ a lift for $f$. Then $f$ is a partially hyperbolic endomorphism if and only if $F: \overline{M} \to \overline{M}$ is a partially hyperbolic diffeomorphism.
\end{proposition}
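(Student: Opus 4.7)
The plan is to transfer the invariant cone family through the covering map $p: \overline{M} \to M$. First I would verify that $F$ is indeed a diffeomorphism: since $f$ is a $C^1$ local diffeomorphism of the closed manifold $M$, it is a finite-sheeted covering, so its lift $F$ is a covering map from the simply connected $\overline{M}$ to itself, hence a bijection, and being also a local diffeomorphism it is a $C^1$ diffeomorphism. I would then equip $\overline{M}$ with the lifted Riemannian metric so that $p$ becomes a local isometry and each $Dp_{\bar x}: T_{\bar x}\overline{M} \to T_{p(\bar x)}M$ is a linear isometry. Differentiating $p \circ F = f \circ p$ gives $Df_{p(\bar x)} \circ Dp_{\bar x} = Dp_{F(\bar x)} \circ DF_{\bar x}$, so at each point $Df$ and $DF$ are conjugate via an isometry, and both norm comparisons and cone-invariance conditions translate directly between $f$ and $F$.

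For the forward direction ($f$ partially hyperbolic $\Rightarrow F$ partially hyperbolic), given an invariant expanding cone family $\mathcal{C}^u$ for $f$ with constant $\lambda > 1$, I would pull it back by setting $\bar{\mathcal{C}}^u(\bar x) := Dp_{\bar x}^{-1}\big(\mathcal{C}^u(p(\bar x))\big)$. The commutative diagram immediately gives $DF_{\bar x}\big(\bar{\mathcal{C}}^u(\bar x)\big) \subset \mathrm{Int}\,\bar{\mathcal{C}}^u(F(\bar x))$ and $\|DF_{\bar x}\bar v\| > \lambda\|\bar v\|$ for $\bar v \in \bar{\mathcal{C}}^u(\bar x)$, exhibiting the partially hyperbolic structure of $F$.

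For the converse I would try to push forward by $\mathcal{C}^u(x) := Dp_{\bar x}\big(\bar{\mathcal{C}}^u(\bar x)\big)$, but this needs the right-hand side to be independent of the choice of lift $\bar x \in p^{-1}(x)$, i.e., $\pi_1(M)$-equivariance of $\bar{\mathcal{C}}^u$. To arrange this, my plan is to replace $\bar{\mathcal{C}}^u$ by a cone family defined intrinsically from derivative data, for instance $\{v \in T_{\bar x}\overline{M}: \|DF^N_{\bar x}v\| \geq \mu^N \|v\|\}$ for a fixed large $N$ and $\mu > 1$ slightly below the expansion constant. Because deck transformations are isometries of $\overline{M}$, any cone characterized purely by derivative norms is automatically $\pi_1(M)$-equivariant; and once $F$ is known to be partially hyperbolic, such a family is both $DF$-invariant and strictly expanded, so the descent produces the required $Df$-invariant expanding cone family on $M$.

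The main obstacle is precisely this lack of canonical equivariance in the converse direction: $F$ does not commute with deck transformations but only intertwines them through the induced homomorphism $f_*: \pi_1(M) \to \pi_1(M)$, which is typically non-invertible when $\deg f > 1$, so an arbitrary $F$-invariant cone on $\overline{M}$ has no reason to descend. The intrinsic derivative-based construction above bypasses this by relying only on the isometric action of the deck group and not on any commutation with $F$ itself.
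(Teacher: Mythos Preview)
The paper does not give its own proof of this proposition: it is quoted verbatim from \cite{costa-micena2021} and used as a black box. So there is no ``paper's approach'' to compare against; your proposal has to be judged on its own merits.

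Your forward implication is correct and essentially the standard argument: pulling back the cone family through the local isometry $p$ immediately transports both the strict invariance and the uniform expansion to $F$.

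The converse, however, has a genuine gap. You correctly identify the obstacle (an arbitrary $DF$-invariant cone need not be $\pi_1(M)$-equivariant, because $F$ only intertwines deck transformations via the non-surjective $f_*$), and your equivariance argument for the set $\mathcal{K}_N(\bar x)=\{v:\|DF^N_{\bar x}v\|\ge\mu^N\|v\|\}$ is valid: it uses only forward iterates and the fact that $F^n\gamma=f_*^n(\gamma)F^n$ with $f_*^n(\gamma)$ an isometry. What is \emph{not} justified is the assertion that this family is strictly $DF$-invariant. Concretely, $v\in\mathcal{K}_N(\bar x)$ means $\|DF^N v\|\ge\mu^N\|v\|$, and you need $\|DF^{N+1}v\|>\mu^N\|DF v\|$; this does not follow formally, and for large $N$ with $\mu$ fixed well below the unstable rate the cone $\mathcal{K}_N$ becomes very wide and can contain vectors with arbitrarily small expansion under a single application of $DF$. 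One needs a genuine domination estimate, together with uniform control on the angle between $E^u$ and $E^{cs}$ on the non-compact $\overline M$, to push this through. A cleaner route is to note that $E^{cs}$ (and $E^s$) are characterised by \emph{forward} growth bounds and are therefore $\pi_1(M)$-equivariant by exactly your argument; they descend to $Df$-invariant subbundles of $TM$, and a narrow cone about the orthogonal complement of $E^{cs}$ then gives an equivariant unstable cone whose strict invariance and uniform expansion follow from domination. Either way, the step you labelled ``once $F$ is known to be partially hyperbolic, such a family is both $DF$-invariant and strictly expanded'' requires a real argument, not an appeal.
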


Thus, at the universal cover level, we do have global unstable and central bundles. This implies that there is an unstable foliation at the universal cover. If $F: \overline{M} \to \overline{M}$ is \textit{dynamically coherent}, then it also has a center foliation.

\begin{definition}
A partial hyperbolic diffeomorphism $f: M \to M$ is said to be \textit{dynamically coherent} if there are invariant foliations $W^{uc}$ ans $W^{cs}$ tangent to $E^{uc} = E^u \oplus E^c$ and $E^{cs} = E^c \oplus E^s$ respectively.
\end{definition}

Since partial hyperbolic endomorphisms do not have unstable or central foliations in general, unless they are special, dynamical coherence is defined as follows.

\begin{definition}
\label{def:dyn-cohe}
A partial hyperbolic endomorphism $f: M \to M$ is said to be \textit{dynamically coherent} if there are unique invariant leaves $W^{uc}(\tilde{x})$ and $W^{cs}(\tilde{x})$ tangent to $E^{uc}(\tilde{x}) = E^u(\tilde{x}) \oplus E^c(\tilde{x})$ and $E^{cs}(x) = E^c(\tilde{x}) \oplus E^s(x)$ respectively.
\end{definition}

Since $E^u(\tilde{x})$ has the greatest expansion rate, the complementary direction $E^{cs}(x)$ is uniquely defined, not depending on the past orbit of $x$ (see \cite[Lemma 2.5]{costa-micena2021}). This bundle can still be not uniquely integrable. If $f$ is a dynamically coherent partial hyperbolic endomorphism, its lift $F: \overline{M} \to \overline{M}$ is dynamically coherent.

In the case that $M = \mathbb{T}^n$, another property that we verify for the lift $F$ is a general geometric property for foliations on $\mathbb{R}^n$, called \textit{quasi-isometry}. It means that the metric given by the induced distance between to points along a leaf of the foliation is equivalent to the Euclidean distance between them.

\begin{definition}
    Given a foliation $\mathcal{F}$ of $\mathbb{R}^n$, with $d_\mathcal{F}$ the distance along the leaves, we say that $\mathcal{F}$ is \emph{quasi-isometric} if there are constants $a, b > 0$ such that, for every $y \in \mathcal{F}(x)$,
    $$d_\mathcal{F}(x,y) \leq a \Vert x-y \Vert +b.$$
\end{definition}

If the foliation $\mathcal{F}$ is uniformly continuous, we can take $b=0$ in the above definition.
    
For special Anosov endomorphisms, quasi-isometry is guaranteed by the existence and the properties of a conjugacy between the map $f$ and a linear one, and we cover such conjugacy in the next subsection. For systems with partial hyperbolicity, quasi-isometry of the stable and unstable leaves implies dynamical coherence \cite{brin2003dynamical}. This geometric property is important as well to our uniqueness result.

\subsection{Hyperbolic linearization and semiconjugacy}
\label{subsec:semicon}
Let $f: \mathbb{T}^n \to \mathbb{T}^n$ be an endomorphism and $A$ its linearization, that is, $A$ is the unique linear map that induces the same homomorphism of $\mathbb{Z}^n \cong \pi_1(\mathbb{T}^n)$ as $f$. Consider $F: \mathbb{R}^n \to \mathbb{R}^n$ a lift of $f$ and $A: \mathbb{R}^n \to \mathbb{R}^n$ the linear lift of $A$ to $\mathbb{R}^n$. If $A$ is hyperbolic, then by \cite[Theorem 8.2.1]{aoki1994topological} and its proof, there is a unique continuous surjection $H: \mathbb{R}^n \to \mathbb{R}^n$ on the universal cover with 
    \begin{itemize}
        \item $A \circ H = H \circ F$;
        \item $d(H, Id) < K$ and $K$ goes to $0$ as $d_ {C^1}(f,A)$ tends to $0$;
        \item $H$ is uniformly continuous.
    \end{itemize}

This semiconjugacy is not necessarily preserved under deck transformations, that is, it does not necessarily projects to a semiconjugacy in $\mathbb{T}^n$. If $f$ is an Anosov endomorphism ($E^c$ is trivial), $H$ projects to $\mathbb{T}^n$ if and only if $f$ is special \cite[Proposition 7]{cantarino2021anosov}. But we can use $H$ to induce a semiconjugacy $\tilde{h}: \mathbb{T}^n_f \to \mathbb{T}^n_A$ between the natural extensions of $f$ and $A$. This is a consequence of \cite[Propositions 7.2.4 and 8.3.1]{aoki1994topological}, which we describe briefly.

Firstly, we need to understand the structure of the natural extension of a toral covering map $f: \mathbb{T}^n \to \mathbb{T}^n$ as a topological group. For a specific finite covering $\tilde{\mathbb{T}^n}$, the natural extension $(S, \tilde{F}) = \varprojlim(\tilde{\mathbb{T}^n}, F')$ of the lift $F'$ is constructed and proved to be a \emph{solenoidal group}, that is, a compact connected abelian group with finite topological dimension \cite[\S 7.2]{aoki1994topological}, that obeys the following the commutative diagram.

    \[
    \begin{tikzcd}[column sep=5em, row sep=2.5em]
    \mathbb{R}^n \arrow[rr,"F" near start] \arrow[dr,<->,swap,"\psi"] \arrow[dd,swap,"p''" near start] &&
        \mathbb{R}^n \arrow[dd,swap,"p''" near start] \arrow[dr,<->,"\psi"'] \\
    & \stackrel[\subseteq \; \mathbb{R}^p \oplus S_q]{}{\psi(\mathbb{R}^n)} \arrow[rr,crossing over,"\overline{F}" near start] \arrow[rr,crossing over,"\overline{F}"' near start,  shift right=1.5ex] &&
      \stackrel[\subseteq \; \mathbb{R}^p \oplus S_q]{}{\psi(\mathbb{R}^n)} \arrow[dd,"p_1" near start] \\
    \tilde{\mathbb{T}^n} \arrow[rr,"F'" near start] \arrow[dr,swap,"\varprojlim"] \arrow[dd,"p'"' near start] && \tilde{\mathbb{T}^n} \arrow[dr,swap,"\varprojlim"] \arrow[dd,"p'"' near start]\\
    & S := \tilde{\mathbb{T}^p} \oplus S_q \arrow[rr, crossing over, "\tilde{F}" near start] \arrow[uu,<-,crossing over,"p_1" near end] && \tilde{\mathbb{T}^p} \oplus S_q \arrow[dd,<->,"\beta" near end]\\
    \mathbb{T}^n \arrow[rr,"f" near start] \arrow[dr,swap,"\varprojlim"] && \mathbb{T}^n \arrow[dr,swap,"\varprojlim"]\\
    & \mathbb{T}^n_f \arrow[rr,"\tilde{f}" near start] \arrow[uu,<->,crossing over,"\beta" near start] && \mathbb{T}^n_f\\
    \end{tikzcd}
    \]
    
The lift of $f$ and $F'$ to the universal cover, $F: \mathbb{R}^n \to \mathbb{R}^n$ is homeomorphic to its image under an injective function $\psi$. The image $\psi(\mathbb{R}^n)$ is dense on $\mathbb{R}^p \oplus S_q$, where $p+q = n$ and $S_q$ is a solenoidal group. Thus $\overline{F} = \psi^{-1} \circ F \circ \psi$ can be extended to $\mathbb{R}^p \oplus S_q$.

The map $p_1: \mathbb{R}^p \oplus S_q \to \tilde{\mathbb{T}^p} \oplus S_q$ is a projection and $\overline{F}$ can be projected to $\tilde{F}$. Then \cite[Theorem 7.2.4]{aoki1994topological} gives us that $(S, \tilde{F}) = \varprojlim(\tilde{\mathbb{T}^n}, F')$. Finally, we have an isomorphism $\beta$ between $(\mathbb{T}^n_f, \tilde{f})$ and $(S, \tilde{F})$ \cite[Lemma 7.2.5]{aoki1994topological}.

The same constructions can be made for the linearization $A: \mathbb{T}^n \to \mathbb{T}^n$. If $A$ is hyperbolic, the semiconjugacy $H: \mathbb{R}^n \to \mathbb{R}^n$ on the universal cover is carried to $\mathbb{R}^p \oplus S_q$ as $\overline{H} = \psi^{-1} \circ H \circ \psi$, and $\overline{H}$ is shown in \cite[Theorem 8.3.1]{aoki1994topological} to project to $S$. Therefore, there is a semiconjugacy $\tilde{h}: \mathbb{T}^n_f \to \mathbb{T}^n_A$ between $(\mathbb{T}^n_f, \tilde{f})$ and $(\mathbb{T}^n_A, \tilde{A})$.

The existence of this semiconjugacy has consequences to the metric and topological entropies of $f$. Indeed, if $f:X\to X$ and $g:Y\to Y$ are continuous maps, there is $\phi:X\to Y$ a continuous surjection such that $\phi \circ f=g\circ \phi,$ and the measures $\mu$ and $\nu = h_*\mu$ are $f$ and $g$-invariant, respectively, then
\begin{enumerate}
    \item $h_{\nu}(g)\leq h_{\mu}(f)$ and
    \item $h_{top}(g)\leq h_{top}(f).$
\end{enumerate}

\section{Proof of Theorem \ref{teoA}}
\label{sec:teoA}

The main step to prove Theorem \ref{teoA} is in demonstrating that the inverse limit $\tilde{f}: M_f \to M_f$ of $f$ is h-expansive. The proof is similar to the one for the invertible case, as proven by \cite{cowiesonyoung}, and we refer to \cite{diazFisher} for a presentation closer to ours.

\begin{theorem}
    If $f: M \to M$ is a $C^1$ partially hyperbolic endomorphism with one-dimensional center bundle, then $\tilde{f}: M_f \to M_f$ is $h$-expansive.
\end{theorem}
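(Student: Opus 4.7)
The argument adapts the Cowieson--Young strategy \cite{cowiesonyoung} (as expounded in \cite{diazFisher}) from the diffeomorphism setting to the natural extension of our endomorphism. Since $\tilde{f}:M_f\to M_f$ is a homeomorphism, by the remark following \cite[Corollary 2.3]{bowen-h-expansive1972} we have $\phi_\epsilon(\tilde{x})=\Gamma_\epsilon(\tilde{x})$, so it is enough to exhibit some $\epsilon_0>0$ such that $h(\tilde{f},\Gamma_\epsilon(\tilde{x}))=0$ for every $\tilde{x}\in M_f$ and every $0<\epsilon\le \epsilon_0$. The plan has two ingredients: first, a \emph{geometric} one showing that the bi-infinite Bowen ball collapses onto a one-dimensional center object; second, a \emph{dynamical} one showing that one-dimensional homeomorphic dynamics carries zero topological entropy.

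For the geometric step, I would work at the natural extension $M_f$, where the invariant splitting $E^s_{x_i}\oplus E^c_{x_i}\oplus E^u_{x_i}$ is well defined along each orbit $\tilde{x}=(x_i)_{i\in\mathbb{Z}}$. Classical invariant manifold theory applied orbit-wise produces local stable, center-stable, center-unstable and center manifolds $W^{s}_{loc}(\tilde{x})$, $W^{cs}_{loc}(\tilde{x})$, $W^{cu}_{loc}(\tilde{x})$ and $W^{c}_{loc}(\tilde{x})$ at each $\tilde{x}$, all varying continuously with $\tilde{x}$ in the $\tilde{d}$ topology and tangent to the corresponding distributions. The standard forward/backward shadowing arguments then give, for $\epsilon$ small enough, the inclusion
\[
\Gamma_\epsilon(\tilde{x})\ \subseteq\ \widetilde{W}^{cs}_{loc}(\tilde{x})\,\cap\,\widetilde{W}^{cu}_{loc}(\tilde{x})\ =\ \widetilde{W}^{c}_{loc}(\tilde{x}),
\]
where tildes denote the lifts of these local manifolds to $M_f$ (i.e.\ all orbits in $M_f$ whose $i$-th coordinate lies in $W^{\sigma}_{loc}(\tilde{f}^{i}\tilde{x})$). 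Since $\dim E^c=1$, the set $\widetilde{W}^{c}_{loc}(\tilde{x})$ is a topological arc, and so is $\Gamma_\epsilon(\tilde{x})$, contained inside it.

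For the dynamical step, observe that $\tilde{f}$ is a homeomorphism that maps $\widetilde{W}^{c}_{loc}(\tilde{x})$ homeomorphically onto its image inside $\widetilde{W}^{c}_{loc}(\tilde{f}\tilde{x})$; being a continuous injection between one-dimensional arcs it is monotone on each connected component. Iterating, the invariant set $Y(\tilde{x}):=\bigcap_{n\in\mathbb{Z}}\tilde{f}^{-n}\bigl(\widetilde{W}^{c}_{loc}(\tilde{f}^{n}\tilde{x})\bigr)\supseteq\Gamma_\epsilon(\tilde{x})$ carries a dynamics by $\tilde{f}$ which, component by component, extends to a monotone homeomorphism of an interval. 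Monotone interval homeomorphisms have zero topological entropy, and entropy can only decrease when passing to an invariant subset, so $h(\tilde{f},\Gamma_\epsilon(\tilde{x}))=0$. Taking the supremum over $\tilde{x}\in M_f$ yields $h^*_{\tilde{f}}(\epsilon)=0$, establishing $h$-expansivity.

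\textbf{Main obstacle.} The only non-routine point is the geometric step: one must set up the local center-stable/center-unstable manifold machinery at the natural extension of a \emph{non-invertible} partially hyperbolic map and verify that $\Gamma_\epsilon(\tilde{x})$ really sits in their intersection, uniformly in $\tilde{x}$. This requires the usual Hadamard--Perron type construction with backward orbits encoded in the second coordinate of $\tilde{x}$, together with uniform estimates on the sizes of the local manifolds; it is exactly the place where \cite{cowiesonyoung, diazFisher} do the bulk of the work in the invertible case, and the adaptation is a careful translation to the $M_f$ setting rather than a fundamentally different argument.
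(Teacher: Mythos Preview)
Your proposal follows the same Cowieson--Young/D\'iaz--Fisher template as the paper, but the execution diverges in two places, and the first of these is a genuine gap.

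\textbf{Geometric step.} You claim $\Gamma_\epsilon(\tilde{x})\subseteq \widetilde{W}^{cs}_{loc}(\tilde{x})\cap\widetilde{W}^{cu}_{loc}(\tilde{x})=\widetilde{W}^{c}_{loc}(\tilde{x})$. The paper explicitly does \emph{not} assume dynamical coherence, and without it neither the equality $W^{cs}_{loc}\cap W^{cu}_{loc}=W^{c}_{loc}$ nor the forward invariance ``$\tilde{f}(\widetilde{W}^c_{loc}(\tilde{x}))\subseteq\widetilde{W}^c_{loc}(\tilde{f}\tilde{x})$'' that you use later is automatic: local center/center-stable/center-unstable manifolds exist by HPS but are not unique, and different choices need not intersect in a single curve or be coherently carried along the orbit. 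The paper sidesteps this entirely: it only proves $\Gamma_\varepsilon(\tilde{x})\subseteq \tilde{D}^{cs}_\alpha(\tilde{x})$, where $D^{cs}_\alpha$ is a \emph{fake} center-stable disk built as the stable saturate of an arbitrary curve $\gamma^c$ tangent to $E^c$ (chosen afresh and compatibly along the forward orbit). A separate argument (Claim~2) then reduces $h(\tilde{f},\tilde{D}^{cs}_\alpha(\tilde{x}))$ to $h(\tilde{f},\Gamma^c)$ by using that the stable direction contracts forward in time. Your route can be made to work, but it requires either assuming coherence or constructing compatible fake center plaques along the full bi-infinite orbit---which is precisely the work the paper avoids by staying on the $cs$ side.

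\textbf{Dynamical step.} Your appeal to ``monotone interval homeomorphisms have zero entropy'' is imprecise: $\Gamma_\epsilon(\tilde{x})$ is not $\tilde{f}$-invariant as a subset of a fixed interval, since $\tilde{f}$ sends it into $\Gamma_\epsilon(\tilde{f}\tilde{x})$, a different arc. The quantity $h(\tilde{f},K)$ for non-invariant $K$ measures orbit complexity starting from $K$, and the classical zero-entropy fact for interval homeomorphisms does not literally apply. The paper's argument is cleaner: the iterates $\tilde{f}^n(\Gamma^c)$ remain curves of length at most $2\alpha$, so spanning numbers are uniformly bounded and the entropy vanishes. This bounded-length argument is what actually does the work, and it applies directly once you know the Bowen ball sits in (or projects to) a one-dimensional center object of uniformly bounded length under iteration.
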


\begin{proof}
    In order to prove that the topological entropy of $\tilde{f}$ restricted to $\Gamma_\varepsilon(\tilde{x})$ is equal to $0$ for each $\tilde{x} \in M_f$, we show that $\Gamma_\varepsilon(\tilde{x})$ is contained on a local center-stable disk and the exponential growth of its spanning sets is given on a local center curve. This curve is one-dimensional, and its iterates have bounded length, so the entropy along it is $0$.
    
    Since $M$ is a closed manifold, we have that the natural extension is a fiber bundle $(\tilde{X}, X, \pi, C)$, where the fiber $C$ is a Cantor set \cite[Theorem 6.5.1]{aoki1994topological}.
    
    Let $\beta > 0$ be sufficiently small such that
    \begin{itemize}
        \item for each $x \in M$, $\pi^{-1}(B(x, \beta)) \simeq B(x, \beta) \times \pi^{-1}(\{x\})$;
        \item there is $\delta \in (0, \beta)$ such that, for each $\tilde{x} \in M_f$ and $\tilde{y} \in \tilde{B}(\tilde{x}, \beta) \simeq B(x, \beta) \times \{\tilde{x}\}$, if $d(\tilde{x}, \tilde{y}) < \delta$ then
        \begin{itemize}
            \item $W^s_\beta(x) \pitchfork \gamma^c_\beta(\tilde{y})$ is a singleton, where $\gamma^c_\beta(\tilde{y})$ is a curve with center $y = \pi(\tilde{y})$ and radius lesser than $\beta$ that is tangent to the central bundle;
            \item $W^u_\beta(\tilde{x}) \pitchfork D^{cs}_\beta(\tilde{y})$ is a singleton, where
            $$D^{cs}_\beta(\tilde{y}) = \bigcup_{z \in \gamma^c_\beta(\tilde{y})} W^s_\beta(z),$$
            is a disk tangent to $E^c \oplus E^s$.
        \end{itemize}
    \end{itemize}

\begin{figure}[ht]
   \centering
    \def\svgwidth{.42\linewidth}
        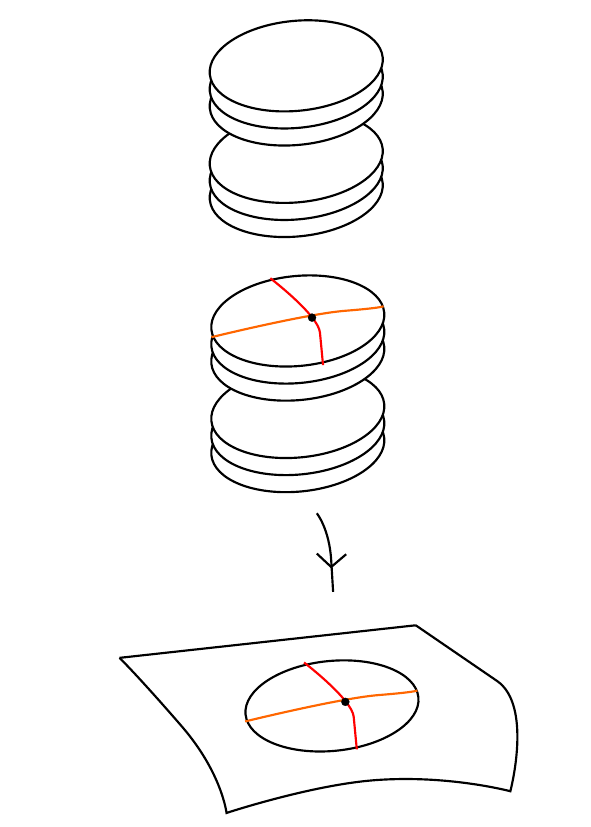
        \caption{$\pi^{-1}(B(x, \beta)) \simeq B(x, \beta) \times \pi^{-1}(\{x\})$ represented for a two-dimensional case.}
        \label{fig:mme1}
\end{figure}
    
    In other words, we are taking $\beta$ small enough to have inside the ball $B(x, \beta)$ a local product structure that depends only on $\tilde{x}$, so that it can be seen either in $B(x, \beta) \subseteq M$ or in $\tilde{B}(\tilde{x}, \beta)$, the connected component of $\tilde{x}$ in $\pi^{-1}(B(x, \beta)) \subseteq M_f$. See Figure \ref{fig:mme1}. Note that we are not assuming dynamical coherence, since we only need the existence of center curves, which is given by the fact that $\dim E^c = 1$.
    
    Let $\alpha > 0$ be such that $\lambda \alpha < \delta$, where $\lambda = \max_{x \in M} \Vert \restr{Df}{E^u} \Vert$. Then we can construct foliated boxes $V(\tilde{x}) \subseteq B(x, \beta)$ with uniform size $\alpha$ (not depending on $x \in M$) such that they have local product structure and their images $f(V(\tilde{x})) \subseteq B(f(x), \beta)$ also have local product structure. More precisely,
    $$V(\tilde{x}) := \bigcup_{y \in D^{cs}_\alpha(\tilde{x})} W^u_\alpha(\tilde{y}),$$
    where $\tilde{y} = \pi^{-1}(\{y\}) \cap \tilde{B}(\tilde{x}, \beta).$
    
    By construction, these boxes are small enough so that we can lift then to the natural extension inside $\tilde{B}(\tilde{x}, \beta)$, and we denote this lift by $\tilde{V}(\tilde{x}) := \pi^{-1}(V(\tilde{x})) \cap \tilde{B}(\tilde{x}, \beta)$, see Figure \ref{fig:mme2}. We use $\tilde{A}$ to denote the lift of any subset $A$ of $V(\tilde{x})$ to $\tilde{V}(\tilde{x})$. Then we can use the inverse $\tilde{f}^{-1}$ on each box, and the proof is concluded exactly as in \cite[Theorem 1.2]{diazFisher}, which we include here for completeness.

    \begin{figure}[ht]
        \centering
        \def\svgwidth{.45\linewidth}
\begingroup%
  \makeatletter%
  \providecommand\color[2][]{%
    \errmessage{(Inkscape) Color is used for the text in Inkscape, but the package 'color.sty' is not loaded}%
    \renewcommand\color[2][]{}%
  }%
  \providecommand\transparent[1]{%
    \errmessage{(Inkscape) Transparency is used (non-zero) for the text in Inkscape, but the package 'transparent.sty' is not loaded}%
    \renewcommand\transparent[1]{}%
  }%
  \providecommand\rotatebox[2]{#2}%
  \newcommand*\fsize{\dimexpr\f@size pt\relax}%
  \newcommand*\lineheight[1]{\fontsize{\fsize}{#1\fsize}\selectfont}%
  \ifx\svgwidth\undefined%
    \setlength{\unitlength}{283.46456693bp}%
    \ifx\svgscale\undefined%
      \relax%
    \else%
      \setlength{\unitlength}{\unitlength * \real{\svgscale}}%
    \fi%
  \else%
    \setlength{\unitlength}{\svgwidth}%
  \fi%
  \global\let\svgwidth\undefined%
  \global\let\svgscale\undefined%
  \makeatother%
  \begin{picture}(1,0.8)%
    \lineheight{1}%
    \setlength\tabcolsep{0pt}%
    \put(0,0){\includegraphics[width=\unitlength,page=1]{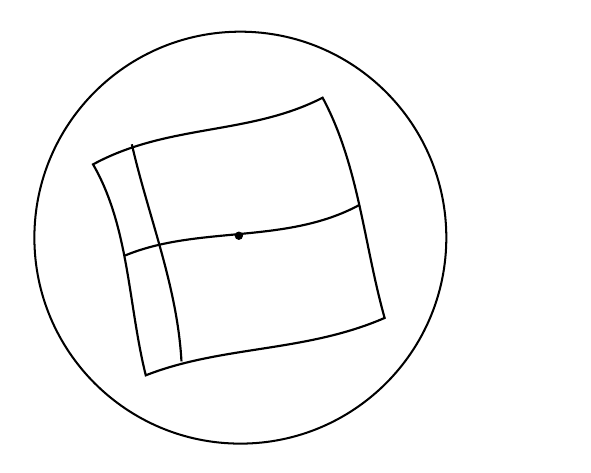}}%
    \put(0.56463799,0.71858708){\makebox(0,0)[lt]{\lineheight{1.25}\smash{\begin{tabular}[t]{l}$\tilde{B}(\tilde{x}, \beta)$\end{tabular}}}}%
    \put(0.73008995,0.57414482){\makebox(0,0)[lt]{\lineheight{1.25}\smash{\begin{tabular}[t]{l}$\tilde{V}(\tilde{x})$\end{tabular}}}}%
    \put(0.40181213,0.34825039){\makebox(0,0)[lt]{\lineheight{1.25}\smash{\begin{tabular}[t]{l}$\tilde{x}$\end{tabular}}}}%
    \put(0.16898026,0.59687658){\makebox(0,0)[lt]{\lineheight{1.25}\smash{\begin{tabular}[t]{l}$\tilde{W}^u_\alpha(\tilde{y})$\end{tabular}}}}%
    \put(0,0){\includegraphics[width=\unitlength,page=2]{mme2.pdf}}%
    \put(0.28365566,0.33597214){\makebox(0,0)[lt]{\lineheight{1.25}\smash{\begin{tabular}[t]{l}$\tilde{y}$\end{tabular}}}}%
    \put(0.61978867,0.4454599){\makebox(0,0)[lt]{\lineheight{1.25}\smash{\begin{tabular}[t]{l}$\tilde{\gamma}(\tilde{x})$\end{tabular}}}}%
    \put(0,0){\includegraphics[width=\unitlength,page=3]{mme2.pdf}}%
  \end{picture}%
\endgroup%

        \caption{The box $\tilde{V}(\tilde{x})$ contained in the connected component of $\tilde{x}$ on $\pi^{-1}(B(x, \beta))$, represented here for the two-dimensional case.}
        \label{fig:mme2}
    \end{figure}
    
    Since the sizes of the boxes $\tilde{V}(\tilde{x})$ are uniform, there is $\varepsilon > 0$ such that $B(\tilde{x}, \varepsilon) \cap \tilde{B}(\tilde{x}, \beta) \subseteq \tilde{V}(\tilde{x})$ for any $\tilde{x} \in M_f$.  
    
    Fixed $\tilde{x} \in M_f$, we consider $\tilde{x}^n : = \tilde{f}^n(\tilde{x})$ and $\gamma_n := \gamma^c_\alpha(\tilde{x}^n)$ chosen in such way that $f(\gamma_{n-1}) \cap \gamma_n$ contains a open interval around $x_n = \pi(\tilde{x}^n)$. This is possible since the central direction is invariant under $f$. So we are fixing each central curve coherently along the forward orbit of $\tilde{x}$, and each foliated box $\tilde{V}$ henceforth is with respect to this choice.
    
    \begin{proof}[Claim 1:] $\Gamma_\varepsilon(\tilde{x}) \subseteq D^{cs}_\alpha(\tilde{x})$.
        
        Indeed, for each $\tilde{z} \in \Gamma_\varepsilon(\tilde{x})$, we have that $\tilde{z}^k := \tilde{f}^k(\tilde{z}) \in B(\tilde{f}^k(\tilde{x}), \varepsilon)$ for each $k \in \mathbb{Z}$ by the definition of $\Gamma_\varepsilon(\tilde{x})$, thus $\tilde{z}^k \in \tilde{V}(\tilde{x}^k)$.
        
        Consider the projections
        $$\begin{array}{cccc}
          p^{cs}:  & \tilde{V}(\tilde{x}) & \to & \tilde{D}^{cs}_\alpha(\tilde{x}) \\
          & \tilde{z} & \mapsto & \tilde{W}^u_\alpha(\tilde{z}) \cap \tilde{D}^{cs}_\alpha(\tilde{x})
        \end{array}$$
        and
        $$\begin{array}{cccc}
          p^{c}:  & \tilde{D}^{cs}_\alpha(\tilde{x}) & \to & \tilde{\gamma}(\tilde{x}) \\
          & \tilde{y} & \mapsto & \tilde{W}^s_\alpha(\tilde{y}) \cap \tilde{\gamma}(\tilde{x}),
        \end{array}$$
        that are well defined by the local product structure. Define $\tilde{y}^n := p^{cs}(\tilde{z}^n)$ and $\tilde{w}^n := p^{c}(\tilde{y}^n)$ for each $n \in \mathbb{N}$.
        
        Suppose that $\tilde{z}^n \notin \tilde{D}^{cs}_\alpha(\tilde{x}^n)$, that is, $\tilde{z}^n \neq \tilde{y}^n$. Then, since $\tilde{y}^n \in \tilde{W}^u(\tilde{z}^n)$, there is $m \in \mathbb{N}$ such that $d(\tilde{z}^{n+m}, \tilde{y}^{n+m}) > \alpha$, which implies that $\tilde{z}^{n+m} \notin \tilde{V}(\tilde{x}^{n+m})$, a contradiction.
    \end{proof}
    
    Define $$\Gamma^c := \bigcap_{n \geq 0} \tilde{f}^{-n}(\tilde{\gamma_n}),$$ as the set of points in $\tilde{\gamma}(\tilde{x})$ such that their images under $\tilde{f}^n$ are in $\tilde{\gamma_n}$. The length of $\tilde{f}^n(\Gamma^c)$ is lesser than $2 \alpha$ for all $n \in \mathbb{N}$. In particular we have that $h(\tilde{f}, \Gamma^c) = 0$, see \cite[Lemma 4.2]{buzzifishersambarinovasquez2012}, for instance.
    
    \begin{proof}[Claim 2:] $h(\tilde{f}, \tilde{D}^{cs}_\alpha(\tilde{x})) = h(\tilde{f}, \Gamma^c)$.
    
    Indeed, if $S$ is an $(n, \epsilon/2)$-spanning set for $\Gamma^c$, then it is an $(n, \epsilon)$-spanning set for
    $$B(\Gamma^c, \epsilon/2) = \{\tilde{x} \in M_f \; : \; \exists \; \tilde{y} \in \Gamma^c \mbox{ with } d(\tilde{x}, \tilde{y}) < \epsilon/2 \}.$$
    
    If $n \in \mathbb{N}$ is sufficiently large, then $\tilde{f}^n(\tilde{D}^{cs}_\alpha(\tilde{x})) \subseteq B(\tilde{f}^n(\Gamma^c), \epsilon/2)$, and $S$ is an $(n, \epsilon)$-spanning set for $\tilde{D}^{cs}_\alpha(\tilde{x})$. Thus
    \begin{align*}
        h(\tilde{f}, \Gamma^c) &= \displaystyle\lim_{\epsilon \rightarrow 0} \displaystyle\limsup_{n\rightarrow \infty}\frac{1}{n}\log \min\{\# S: S \subseteq \Gamma^c \ {\rm is  \ an \ }(n,\epsilon){\rm -spanning \ set} \}\\
        &= \displaystyle\lim_{\epsilon \rightarrow 0} \displaystyle\limsup_{n\rightarrow \infty}\frac{1}{n}\log \min\{\# S: S \subseteq \tilde{D}^{cs}_\alpha(\tilde{x}) \ {\rm is  \ an \ }(n,\epsilon/2){\rm -spanning \ set} \}\\
        &= h(\tilde{f}, \tilde{D}^{cs}_\alpha(\tilde{x})).
    \end{align*}

\end{proof}

Since $h(\tilde{f}, \Gamma^c) = 0$ and $\Gamma_\varepsilon(\tilde{x}) \subseteq \tilde{D}^{cs}_\alpha(\tilde{x})$, then Claim 2 implies that $h(\tilde{f}, \Gamma_\varepsilon(\tilde{x})) = 0$.
\end{proof}
 
The previous theorem implies that $\tilde{f}$ is $h$-expansive, and from Theorem \ref{emme} we have that $\tilde{\mu} \mapsto h_{\tilde{\mu}}(\tilde{f})$ is upper semicontinuous, where $\tilde{\mu}$ is an $\tilde{f}$-invariant measure. Remember that by Proposition \ref{prop:corresp-measures} there is a injective correspondence between $\tilde{f}$-invariant and $f$-invariant measures given by $\mu = \pi_*{\tilde{\mu}}$.

Now consider $\phi: M \to R$ a continuous potential on $M$. Consider the function $\mu \mapsto P_\mu(f, \phi)$, where 

$$P_\mu(f, \phi) := h_\mu(f) + \int_M \phi d\mu.$$

Then, since $h_\mu(f) = h_{\tilde{\mu}}(\tilde{f})$ and

$$\int_M \phi d\mu = \int_M \phi d\pi_*{\tilde{\mu}} = \int_{M_f} \phi \circ \pi d\tilde{\mu},$$
thus

$$P_\mu(f, \phi) = P_{\tilde{\mu}}(\tilde{f}, \phi \circ \pi) := h_{\tilde{\mu}}(\tilde{f}) + \int_{M_f} \phi \circ \pi d\tilde{\mu}.$$

So, for any potential given by $\phi \circ \pi$ (a lift to $M_f$ of a continuous potential on $M$), the function $\tilde{\mu} \mapsto P_{\tilde{\mu}}(\tilde{f}, \phi \circ \pi)$ is upper semicontinuous and, thus, admits a maximum, an equilibrium state. Additionally, this maximum $\tilde{\mu}$ projects to $\mu$, an equilibrium state for $\phi$.

\section{Proof of Theorem \ref{teoB}}
\label{sec:teoB}


Even without an unstable foliation for $f$ in $\mathbb{T}^n$, an unstable foliation does exist for $F$ in $\mathbb{R}^n$, and all computations in the universal cover work as in the invertible case. Thus, as in \cite[Lemma 3.4]{ures2012intrinsic}, we prove the following, where $H: \mathbb{R}^n \to \mathbb{R}^n$ is a lift for the semiconjugacy $h: \mathbb{T}^n \to \mathbb{T}^n$.

\begin{lemma}
    \label{lem:lemma1}
    If $\dim(E_F^c) = 1$, $f$ is dynamically coherent and $A = f_*$ is hyperbolic, then $A$ admits a partially hyperbolic splitting $\mathbb{R}^n = E^u_A \oplus E^c_A \oplus E^s_A$ with $\dim(E_A^c) = 1$. Additionally, $H(W^c_F(x)) = W^c_A(H(x))$ for all $x \in \mathbb{R}^n$.
\end{lemma}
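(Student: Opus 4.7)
My plan is to adapt the strategy of Lemma 3.4 of Ures (for $\mathbb{T}^3$ partially hyperbolic diffeomorphisms) to the endomorphism setting, exploiting that the lift $F$ is a homeomorphism of $\mathbb{R}^n$ (since $f: \mathbb{T}^n \to \mathbb{T}^n$ is a covering map) and that the semiconjugacy $H$ has bounded displacement. Since $A$ is hyperbolic, I will group its (generalized) eigenspaces by eigenvalue modulus relative to the center constants $\mu_c^\pm$ of $F$ (with $\mu_s < \mu_c^- \leq \mu_c^+ < \mu_u$ by absolute partial hyperbolicity), defining $E^s_A$, $E^c_A$, $E^u_A$ as the spans of eigenspaces with $|\lambda|<\mu_c^-$, $\mu_c^-\leq|\lambda|\leq\mu_c^+$, and $|\lambda|>\mu_c^+$, respectively. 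This gives a direct-sum decomposition $\mathbb{R}^n = E^s_A \oplus E^c_A \oplus E^u_A$ satisfying the partial hyperbolicity cone inequalities by construction; what remains is to show $\dim E^c_A = 1$.

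The key leaf-to-subspace comparison is the following. For $y \in W^\sigma_F(x)$ with $\sigma\in\{s,u,c\}$, set $v:=H(y)-H(x)$. Then $A^k v = H(F^k(y)) - H(F^k(x))$ and $d(H,\mathrm{Id})\leq K$ give $\|A^k v\| \leq \|F^k(y)-F^k(x)\| + 2K$. The PH rates along $W^\sigma_F$ bound the right-hand side; for instance, for $\sigma = s$, $\|A^k v\| \leq C\mu_s^k + 2K$ for $k \geq 0$. Decomposing $v$ in the eigenbasis of $A$ and letting $k\to\infty$ forces each component of $v$ into an eigenspace with $|\lambda|<\mu_c^-$, so $v \in E^s_A$. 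The same argument, run in forward or backward time (using that $F^{-1}$ is well defined on $\mathbb{R}^n$), gives $H(W^\sigma_F(x)) \subseteq H(x) + E^\sigma_A$ for all $\sigma \in \{s,u,c\}$.

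To fix the dimensions, I would use quasi-isometry of each $W^\sigma_F$ to show $\dim E^\sigma_A \geq \dim E^\sigma_F$. The idea is that, by quasi-isometry, $W^\sigma_F(x)$ contains $d=\dim E^\sigma_F$ linearly independent tangent directions at $x$ which integrate out to arbitrarily large Euclidean displacements; since $H$ is $K$-close to the identity, the image $H(W^\sigma_F(x))$ retains these $d$ independent Euclidean spread directions up to an additive constant, and is contained in the affine subspace $H(x)+E^\sigma_A$, forcing $\dim E^\sigma_A \geq d$. Summing over $\sigma$ and using $\dim E^s_F + 1 + \dim E^u_F = n = \sum_\sigma \dim E^\sigma_A$ forces equality throughout, so in particular $\dim E^c_A = 1$.

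For the equality $H(W^c_F(x)) = W^c_A(H(x))$, the inclusion $\subseteq$ is immediate from the previous paragraph. The reverse follows because $H(W^c_F(x))$ is a connected (continuous image of a connected leaf), closed (by properness of $H$, which follows from $d(H,\mathrm{Id})\leq K$) subset of the 1-dimensional affine line $W^c_A(H(x))$, and it is unbounded in both directions along that line since $W^c_F(x)$ is a complete leaf unbounded in both directions and $H$ is $K$-close to the identity; such a subset of a line must be the whole line. The step I expect to be the main obstacle is the dimension lower bound $\dim E^\sigma_A \geq \dim E^\sigma_F$, since it requires converting intrinsic leaf spread into rigid Euclidean spread and then transporting that spread through $H$ into an actual linear subspace of $\mathbb{R}^n$; the other steps are direct adaptations of the invertible argument to the lift $F$.
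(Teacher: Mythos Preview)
Your overall strategy---adapt \cite[Lemma~3.4]{ures2012intrinsic} to the lift $F:\mathbb{R}^n\to\mathbb{R}^n$---is exactly what the paper does; the paper gives no self-contained argument for this lemma and simply asserts that ``all computations in the universal cover work as in the invertible case''.

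There is, however, a genuine gap in your elaboration. From $\|A^k v\|\le C\mu_s^k+2K$ for $k\ge 0$ you conclude that $v$ has no eigencomponent with $|\lambda|\ge\mu_c^-$. This does not follow: because of the additive constant $2K$, the inequality only says that $\|A^k v\|$ stays bounded as $k\to\infty$, which rules out components with $|\lambda|>1$ but says nothing about components with $\mu_c^-\le|\lambda|<1$. (Concretely: take $\mu_s=0.4$, $\mu_c^-=0.7$, and a nonzero component in an eigenspace with $|\lambda|=0.9$; then $0.9^k\|v_\lambda\|\to 0$ and the bound is satisfied.) The symmetric defect occurs for $\sigma=u$ under backward iteration. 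Hence your inclusions $H(W^{s}_F(x))\subseteq H(x)+E^{s}_A$ and $H(W^{u}_F(x))\subseteq H(x)+E^{u}_A$, with your cutoff definitions of $E^{s}_A$ and $E^{u}_A$, are not established, and your dimension count $\sum_\sigma\dim E^\sigma_A=n$ forcing $\dim E^c_A=1$ collapses.

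By contrast, your argument for the \emph{center} inclusion is sound: forward iterates give $\|A^k v\|\le C(\mu_c^+)^k+2K$, which genuinely excludes components with $|\lambda|>\mu_c^+$ (their growth outpaces $(\mu_c^+)^k$), and backward iterates exclude $|\lambda|<\mu_c^-$; so $H(W^c_F(x))\subseteq H(x)+E^c_A$ does hold. What remains unproved in your sketch is precisely $\dim E^c_A=1$. In Ures's $\mathbb{T}^3$ setting this step is much lighter because every bundle is one-dimensional; in higher dimension one needs a further argument (for instance running the growth comparison on the $cs$- and $cu$-leaves, where $\mu_c^\pm$ \emph{are} the relevant upper rates, combined with a careful spread estimate) that goes beyond what you have written. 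Finally, note that you invoke quasi-isometry of $W^\sigma_F$, which is not listed among the hypotheses of the lemma itself; it is, however, a standing hypothesis of Theorem~D, so in context this is harmless.
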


If $A$ is hyperbolic, we also have that there is $\alpha > 0$ such that, for all $x, y \in \mathbb{R}^n$, $H(x) = H(y)$ if and only if $d(F^k(x), F^k(y)) < \alpha$ for all $k \in \mathbb{Z}$. Indeed, if $H(x) = H(y)$, then
$$A^k(H(x)) = A^k(H(y)) \implies H(F^k(x)) = H(F^k(y)) \; \mbox{ for all } k \in \mathbb{Z}.$$
Since $d(H(z), z) < K$ for all $z \in \mathbb{R}^n$, then
$$d(F^k(x), F^k(y)) \leq d(F^k(x), H \circ F^k(x)) + d(H \circ F^k(x), H\circ F^k(y)) + d(H \circ F^k(y), F^k(y)),$$
which is lesser than $2K$ for all $k \in \mathbb{Z}$.

For the reciprocal implication, we use the expansiveness of $A$ and the fact that it is linear to guarantee that it is expansive with any expansiveness constant (see, for instance, \cite[Lemma 8.2.3]{aoki1994topological}). Since
\begin{align*}
    &d(A^k \circ H(x), A^k \circ H(y)) = d(H \circ F^k(x), H \circ F^k(y)) \leq d(H \circ F^k(x), F^k(x))\\
    &+ d(F^k(x), F^k(y)) + d(F^k(y), H \circ F^k(y)) < 2K + \alpha
\end{align*}
for all $k \in \mathbb{Z}$, then $H(x) = H(y)$.

\begin{lemma}
    \label{lem:lemma2}
    If $A$ is hyperbolic, $F$ is dynamically coherent and $W^{u/s}_F$ is quasi-isometric, then $H(x) = H(y)$ implies that $y \in W^c_F(x)$.
\end{lemma}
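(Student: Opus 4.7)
The plan is to argue by contradiction: assume $H(x)=H(y)$ but $y \notin W^c_F(x)$. Since $F$ is dynamically coherent we have $W^c_F(x) = W^{cs}_F(x) \cap W^{cu}_F(x)$, so, without loss of generality, $y \notin W^{cs}_F(x)$ (the case $y \notin W^{cu}_F(x)$ is treated symmetrically by replacing forward with backward iteration and swapping the roles of stable and unstable). Quasi-isometry of $W^u_F$ together with dynamical coherence yields a global product structure in $\mathbb{R}^n$, so there is a unique point $z = W^u_F(y) \cap W^{cs}_F(x)$, and $z \neq y$ by assumption. Since $y$ and $z$ lie on the same unstable leaf and are distinct, uniform unstable expansion combined with quasi-isometry of $W^u_F$ gives $\Vert F^k y - F^k z \Vert \to \infty$ as $k \to +\infty$. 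Because the discussion preceding the lemma shows that $H(a)=H(b)$ implies $\Vert F^k a - F^k b\Vert$ stays bounded in $k \in \mathbb{Z}$, it is enough to prove $H(z) = H(y)$ in order to reach the desired contradiction.

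To show $H(z) = H(y)$, I first establish the auxiliary fact that $H$ respects the hyperbolic foliations of $F$: $H(W^s_F(a)) \subseteq W^s_A(H(a))$ and $H(W^u_F(a)) \subseteq W^u_A(H(a))$ for every $a \in \mathbb{R}^n$. For the stable case, if $b \in W^s_F(a)$, then uniform stable contraction together with quasi-isometry of $W^s_F$ gives $\Vert F^k a - F^k b\Vert \to 0$ as $k \to +\infty$, so $\Vert A^k(H(a) - H(b))\Vert = \Vert H(F^k a) - H(F^k b)\Vert \leq \Vert F^k a - F^k b\Vert + 2K$ stays bounded, and hyperbolicity of $A$ forces $H(a) - H(b) \in E^s_A$. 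The unstable case is symmetric under backward iteration. With this in hand, decompose $z \in W^{cs}_F(x)$ by setting $w := W^c_F(x) \cap W^s_F(z)$: Lemma~\ref{lem:lemma1} gives $H(w) \in W^c_A(p)$ with $p := H(x)$, and the stable preservation above gives $H(z) \in W^s_A(H(w))$, so $H(z) - p \in E^{cs}_A$.

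Independently, since $y$ and $z$ share an unstable leaf, backward $F$-iterates contract along $W^u_F$; combined with the $2K$-bound on $\Vert F^{-k} x - F^{-k} y \Vert$, this gives $\Vert F^{-k} x - F^{-k} z \Vert \leq 2K + o(1)$, and pushing through $H$ yields that $A^{-k}(p - H(z))$ is uniformly bounded for $k \geq 0$, whence $p - H(z) \in E^u_A$ by hyperbolicity of $A$. Combined with the previous paragraph, $p - H(z) \in E^u_A \cap E^{cs}_A = \{0\}$, so $H(z) = p = H(y)$, contradicting $\Vert F^k y - F^k z \Vert \to \infty$. The step I expect to be the main technical obstacle is the foliation-preservation claim for $H$, which is where the quasi-isometry hypothesis is essential: without it, leafwise contraction under $F$ does not translate into ambient contraction in $\mathbb{R}^n$, and the characterization of $E^s_A$ and $E^u_A$ via bounded forward and backward orbits of $A$ could not be invoked. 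A secondary subtlety is justifying the global product structure of $W^u_F$ and $W^{cs}_F$ on $\mathbb{R}^n$ in the endomorphism setting, but under the stated hypotheses this follows from the standard arguments once the foliations are lifted to the universal cover.
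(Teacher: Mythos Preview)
Your setup --- arguing by contradiction, reducing to $y \notin W^{cs}_F(x)$, and invoking global product structure to produce $z = W^{cs}_F(x) \cap W^u_F(y)$ --- matches the paper exactly. After this point, however, you take a detour through ``$H(z)=H(y)$'' that carries a genuine gap.

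The problem lies in your foliation-preservation step. From $b \in W^s_F(a)$ you obtain $\Vert F^k a - F^k b\Vert \to 0$, hence $\Vert A^k(H(a)-H(b))\Vert$ is merely \emph{bounded} (the additive $2K$ does not disappear). Hyperbolicity of $A$ then only places $H(a)-H(b)$ in the full contracting subspace of $A$; in the dominated splitting $E^u_A \oplus E^c_A \oplus E^s_A$ supplied by Lemma~\ref{lem:lemma1}, this equals $E^{cs}_A$, not $E^s_A$, whenever the (hyperbolic) center eigenvalue of $A$ has modulus $<1$. The dual issue hits your unstable claim when the center eigenvalue has modulus $>1$: backward-bounded $A$-orbits only lie in $E^{cu}_A$. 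So exactly one of your two preservation claims is as strong as you state; the other loses the center direction. Running your final argument in, say, the center-expanding case then yields only $p - H(z) \in E^{cs}_A \cap E^{cu}_A = E^c_A$, which does not force $H(z)=p$. A secondary issue is that producing $w = W^c_F(x)\cap W^s_F(z)$ requires a product structure of $W^c_F$ and $W^s_F$ inside $W^{cs}_F$, whereas the lemma only assumes quasi-isometry of $W^{u/s}_F$.

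The paper sidesteps all of this by never aiming at $H(z)=H(y)$. Having fixed $z$, it contradicts $H(x)=H(y)$ directly: with $D_{cs}=d_{cs}(x,z)$ and $D_u=d_u(y,z)$, absolute partial hyperbolicity gives $1<\lambda_c<\lambda_u$ with $d(F^k x,F^k z)\le \lambda_c^k D_{cs}$ and $d_u(F^k y,F^k z)\ge \lambda_u^k D_u$; quasi-isometry of $W^u_F$ converts the latter to $d(F^k y,F^k z)\ge (\lambda_u^k D_u - b)/a$, and then
\[
d(F^k x,F^k y)\ \ge\ d(F^k y,F^k z)-d(F^k x,F^k z)\ \ge\ \frac{\lambda_u^k D_u - b}{a}-\lambda_c^k D_{cs}\ \xrightarrow[k\to\infty]{}\ \infty,
\]
contradicting the uniform bound implied by $H(x)=H(y)$. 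The essential point you are missing is to exploit the rate gap $\lambda_u>\lambda_c$ of $F$ itself, rather than routing the comparison through the hyperbolic splitting of $A$.
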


\begin{proof}
Suppose that $y \notin W^c_F(x) = W^{uc}_F(x) \cap W^{cs}_F(x)$. Then $y \notin W^{cs}_F(x)$ or $y \notin W^{uc}_F(x)$. Let us see that the first case is absurd, and the second case is analogous.

If $y \notin W^{cs}_F(x)$, then there is $z \neq y$ such that $z = W^{cs}_F(x) \cap W^u_F(y)$. Indeed, the global product structure follows from the quasi-isometry, as proved in \cite[Theorem 1.1]{hammerlindl2012dynamics} for partially hyperbolic diffeomorphisms on (not necessary compact) manifolds. It then applies to $F: \mathbb{R}^n \to \mathbb{R}^n$.

Consider $D_{cs} = d_{cs}(x,z)$ and $D_u = d_u(y,z)$. There are $1 < \lambda_c < \lambda_u$ such that
\begin{align*}
    &d(F^k(x), F^k(z)) \leq \lambda_c^k D_{cs} \mbox{ and}\\
    &d_u(F^k(y), F^k(z)) \geq \lambda_u^k D_{u}
\end{align*}
for all $k \in \mathbb{Z}$. Since $W^u_F$ is quasi-isometric, we have $d_u(F^k(y), F^k(z)) \leq a d(F^k(y), F^k(z)) + b$ and thus 
$$d(F^k(y), F^k(z)) \geq \dfrac{\lambda_u^k D_u -b}{a}.$$

Therefore
\begin{align*}
    d(F^k(x), F^k(y)) &\geq d(F^k(y), F^k(z)) - d(F^k(x), F^k(z))\\
    &> \dfrac{\lambda_u^k D_u -b}{a} - \lambda_c^k D_{cs} \xrightarrow[]{n \to \infty} \infty,
\end{align*}
which implies that $H(x) \neq H(y)$, a contradiction.
\end{proof}

\begin{remark}
The above lemma requires that $n = \dim(\mathbb{T}^n) \geq 3$, for we need $\dim(E^\sigma_F) \neq 0$, $\sigma \in \{u, c, s\}$. If $n = 2$, we have the same result just by requiring that $A$ is hyperbolic. Indeed, in this case, $\dim(E^c) = \dim(E^u) = 1$ and we have dynamical coherence, quasi-isometry and global product structure by \cite{hall2021partially}. Supposing that $y \notin W^c_F(x)$, there is a unique $z \in W^u_F(y) \cap W^c_F(x)$, $z \neq y$, and the proof by absurd is the same.
\end{remark}

\begin{remark}
By lemmas \ref{lem:lemma1} and \ref{lem:lemma2}, for all $z \in \mathbb{R}^n$ we have that $H^{-1}(W^c_A(z)) = W^c_F(x)$ for any $x \in H^{-1}(z)$.
\end{remark}

\begin{lemma}
    \label{prop:prop1}
    If $A$ is hyperbolic, $\dim(E^c_F) = 1$, $F$ is dynamically coherent and $W^\sigma_F$ is quasi-isometric for $\sigma \in \{u, c, s\}$, then for all $z \in \mathbb{R}^n$ we have that $H^{-1}(z)$ is a compact and connected subset of $W^c_F(x)$ for $x \in H^{-1}(z)$.
\end{lemma}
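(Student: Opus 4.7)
The statement has three parts: containment in $W^c_F(x)$, compactness, and connectedness of $H^{-1}(z)$. The first is immediate from \autoref{lem:lemma2} together with the remark that $H(W^c_F(x)) = W^c_A(H(x))$: for any $y \in H^{-1}(z)$ one has $H(x) = H(y) = z$, hence $y \in W^c_F(x)$. Compactness is almost as quick. The set $H^{-1}(z)$ is closed because $H$ is continuous, and it is bounded because $d(H,\mathrm{Id}) < K$ forces $H^{-1}(z) \subseteq \overline{B(z,K)}$; combined with closedness in $\mathbb{R}^n$, it is compact.

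The substantive part is connectedness. Let $x,y \in H^{-1}(z)$. By the first step both lie in the $1$-dimensional leaf $W^c_F(x)$, so there is a well-defined center arc $[x,y]_c \subseteq W^c_F(x)$. Pick an arbitrary $w \in [x,y]_c$; the goal is to show $H(w) = z$, which gives connectedness (in fact, path-connectedness) of $H^{-1}(z)$. Since $F$ preserves the center leaves and $\dim E^c_F = 1$, the map $F^k$ sends the arc $[x,y]_c$ homeomorphically onto the arc $[F^k(x), F^k(y)]_c$ inside $W^c_F(F^k(x))$, and $F^k(w)$ lies on this arc. In particular, along the center leaf,
\[
d_c\bigl(F^k(x), F^k(w)\bigr) \le d_c\bigl(F^k(x), F^k(y)\bigr) \quad \text{for all } k \in \mathbb{Z}.
\]

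The plan is now to bound the right-hand side uniformly in $k$ and then invoke expansiveness of the linear hyperbolic map $A$. Since $H(F^k(x)) = A^k(z) = H(F^k(y))$ and $d(H,\mathrm{Id}) < K$, the triangle inequality gives $d(F^k(x), F^k(y)) \le 2K$ for every $k$. The hypothesis that $W^c_F$ is quasi-isometric then yields constants $a, b > 0$ with
\[
d_c\bigl(F^k(x), F^k(y)\bigr) \le a\, d\bigl(F^k(x), F^k(y)\bigr) + b \le 2aK + b,
\]
so $d(F^k(x), F^k(w)) \le d_c(F^k(x), F^k(w)) \le 2aK + b$ uniformly in $k$. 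Applying the semiconjugacy and $d(H,\mathrm{Id}) < K$ once more,
\[
d\bigl(A^k(z), A^k(H(w))\bigr) = d\bigl(H(F^k(x)), H(F^k(w))\bigr) \le 2K + 2aK + b
\]
for every $k \in \mathbb{Z}$. Because $A$ is linear and hyperbolic, any orbit difference that stays bounded in both time directions must vanish (the stable/unstable decomposition of $A(z) - A(H(w))$ forces both components to be zero), as was recorded just before \autoref{lem:lemma2}. Hence $H(w) = z$, and the arc $[x,y]_c$ lies entirely in $H^{-1}(z)$, proving connectedness.

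\textbf{Expected obstacle.} The only real delicate point is the uniform bound on $d_c(F^k(x), F^k(w))$: one needs the $1$-dimensionality of $E^c$ (to ensure that $F^k(w)$ stays between $F^k(x)$ and $F^k(y)$ on the center leaf), dynamical coherence (to make the center curves and their $F$-images meaningful), and quasi-isometry of $W^c_F$ (to convert the trivial extrinsic bound $d(F^k(x), F^k(y)) \le 2K$ into an intrinsic bound along the leaf). Once these three ingredients are combined, the expansiveness of the linear hyperbolic $A$ finishes the proof cleanly.
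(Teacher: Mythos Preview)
Your proof is correct and follows essentially the same route as the paper's: closedness plus the bound $d(H,\mathrm{Id})<K$ for compactness, \autoref{lem:lemma2} for containment in $W^c_F(x)$, and for connectedness the chain $d(F^k(x),F^k(w)) \le d_c(F^k(x),F^k(w)) \le d_c(F^k(x),F^k(y)) \le a\,d(F^k(x),F^k(y))+b \le 2aK+b$, followed by the expansiveness of $A$. If anything, you are slightly more explicit than the paper in justifying why $F^k(w)$ remains between $F^k(x)$ and $F^k(y)$ on the one-dimensional center leaf and in spelling out the final triangle inequality before invoking expansiveness.
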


\begin{proof}
We have that $H^{-1}(z)$ is closed, and it is bounded since $d(H, Id) < K$. For $x \in H^{-1}(z)$, $H^{-1}(z)$ it is a subset of $W^c_F(x)$ by Lemma \ref{lem:lemma2}. It remains to prove connectedness. Fixing $x, y \in H^{-1}(z)$ and given $w \in [x, y]_c$, we have by the quasi-isometry of $W^c_F$ that
\begin{align*}
    d(F^k(x) F^n(w)) &\leq d_c(F^k(x) F^n(w)) \leq d_c(F^k(x) F^n(y)) \\
    &\leq a d(F^k(x) F^n(y)) + b \leq 2 a K + b,\\
\end{align*}
which implies that $z = H(x) = H(w)$, as previously shown using the expansiveness of $A$. Thus, $[x, y]_c \subseteq H^{-1}(z)$ for all $x, y \in H^{-1}(z)$, and $H^{-1}(z)$ is connected.
\end{proof}

Let $\overline{\Gamma} = \{z \in \mathbb{R}^n \; : \; \#H^{-1}(z) > 1 \}$ be the set of points for which $H$ fails to be invertible. Consider $p(\overline{\Gamma}) = \Gamma$, with $p: \mathbb{R}^n \to \mathbb{T}^n$ the canonical projection.

\begin{lemma}
    \label{lem:lemma3}
    Under the hypotheses of Lemma \ref{prop:prop1}, $m(\overline{\Gamma})$ = 0.
\end{lemma}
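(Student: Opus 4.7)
The plan is to reduce the statement to a center-leaf argument plus Fubini, using that $E^c$ is one-dimensional. First I would observe that the content of Proposition \ref{prop:prop1} is very strong in the one-dimensional center setting: for every $z \in \mathbb{R}^n$, $H^{-1}(z)$ is a compact connected subset of the center leaf $W^c_F(x)$ (for any $x \in H^{-1}(z)$), and since $\dim E^c_F = 1$ this leaf is a topological line; hence $H^{-1}(z)$ is either a single point or a non-degenerate closed arc. In particular, if $z \in \overline{\Gamma}$, then $H^{-1}(z)$ is a non-degenerate arc of $W^c_F(x)$.

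Next, I would use Lemma \ref{lem:lemma1}, which says $H(W^c_F(x)) = W^c_A(H(x))$ and that $A$ admits a partially hyperbolic splitting with $\dim E^c_A = 1$. Fixing a center leaf $W^c_A(z_0)$ of $A$, its $H$-preimage is exactly the center leaf $W^c_F(x_0)$ of $F$ for any $x_0 \in H^{-1}(z_0)$. Restricted to this leaf, $H \colon W^c_F(x_0) \to W^c_A(z_0)$ is a continuous surjection between two one-dimensional manifolds whose point preimages are closed arcs (possibly trivial). Two distinct points of $W^c_A(z_0) \cap \overline{\Gamma}$ give rise to two disjoint non-degenerate arcs on the line $W^c_F(x_0)$, and any family of pairwise disjoint non-degenerate arcs on a one-dimensional manifold is at most countable. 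Therefore $W^c_A(z_0) \cap \overline{\Gamma}$ is at most countable, hence has one-dimensional Lebesgue measure zero inside the leaf.

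Finally, I would invoke Fubini. Since $A$ is a hyperbolic linear map, by Lemma \ref{lem:lemma1} the center foliation $W^c_A$ is the linear foliation of $\mathbb{R}^n$ by translates of $E^c_A$. Writing $\mathbb{R}^n = E^c_A \oplus E^{us}_A$ (with $E^{us}_A = E^u_A \oplus E^s_A$) and letting $m_c$, $m_{us}$ denote Lebesgue measures on the two factors, we get
\[
m(\overline{\Gamma}) \;=\; \int_{E^{us}_A} m_c\bigl(\overline{\Gamma} \cap W^c_A(y)\bigr)\, dm_{us}(y) \;=\; 0,
\]
using that the inner integrand vanishes for every $y$ by the previous paragraph, after checking measurability (which follows because $\overline{\Gamma}$ is the projection of the closed set $\{(x,y) : x\neq y,\ H(x)=H(y)\} \subseteq \mathbb{R}^n \times \mathbb{R}^n$ onto the second coordinate via $H$, so it is an analytic, hence universally measurable, set; alternatively one writes $\overline{\Gamma}$ as a countable union of closed sets by stratifying by a lower bound on the length of $H^{-1}(z)$).

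The step I expect to be the only real subtlety is the measurability of $\overline{\Gamma}$ and of the intersection $\overline{\Gamma} \cap W^c_A(y)$ needed to apply Fubini cleanly; the geometric heart of the argument, namely the countability on each center leaf, is immediate from one-dimensionality of $E^c$ combined with Proposition \ref{prop:prop1} and Lemma \ref{lem:lemma1}, and there is nothing else to do.
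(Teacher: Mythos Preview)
Your proposal is correct and follows essentially the same route as the paper: show that each center leaf $W^c_A(z)$ meets $\overline{\Gamma}$ in at most countably many points (because their $H$-preimages are pairwise disjoint nontrivial arcs in the one-dimensional leaf $W^c_F(x)$), then apply Fubini using that $W^c_A$ is a linear foliation. The paper does not discuss the measurability of $\overline{\Gamma}$ at all, so your added remark on that point is a refinement rather than a deviation.
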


\begin{proof}
For all $z \in \mathbb{R}^n$ there is $x \in \mathbb{R}^n$ such that $H^{-1}(W^c_A(z)) = W^c_F(x)$. Consider $\overline{\Gamma}^c_z = W^c_A(z) \cap \overline{\Gamma}$. Then $\{H^{-1}(y) \; : \; y \in \overline{\Gamma}^c_z \}$ is a family of disjoint nontrivial intervals in $W^c_A(z)$, we obtain that $\overline{\Gamma}^c_z$ is countable for all $z \in \mathbb{R}^n$. Therefore, since $W^c_A$ is a linear foliation, Fubini's theorem provides that $m(\overline{\Gamma})=0$.
\end{proof}

Thus, $\Gamma = p(\overline{\Gamma})$ also has zero volume on $\mathbb{T}^n$, and it satisfies $\Gamma = \{z \in \mathbb{T}^n \; : \; \#h^{-1}(z) > 1 \}$ since $H$ is a lift for $h$. Moreover, $h^{-1}(\Gamma)=\Gamma.$




Lemma \ref{prop:prop1} implies that $H^{-1}(\overline{z})$ is a compact and connected one-dimensional central disk, with its length bounded with a constant that does not depend on $\overline{z}$. But if $\overline{x}, \overline{y} \in \mathbb{R}^n$ are such that $H(\overline{x}) = H(\overline{y})$, then we have that $d(F^k(\overline{x}), F^k(\overline{x})) < 2K$ for any $k \in \mathbb{Z}$. Thus $\overline{x} \in H^{-1}(\overline{z})$ if and only if $F^n(\overline{x}) \in H^{-1}(A^n(\overline{z}))$ for any $n \in \mathbb{N}$. Therefore, the length of $F^n(H^{-1}(\overline{z}))$ is also bounded with the same constant than $H^{-1}(\overline{z})$. These bounds are the same for the projections of the stable manifolds, and this implies $h(f, h^{-1}(z)) = 0$ for all $z \in \mathbb{T}^n$.

By Theorem \ref{teoA} there exists a measure of maximal entropy $\mu$. Then, using Ledrappier--Walters' formula, we have that $h_{\ast}\mu=m.$ Now, we want to prove that $\mu$ is the unique measure of maximal entropy that project into $m$. By contradiction, suppose that there exists a measure of maximal entropy $\eta\neq \mu$, then  $h_{\ast}\eta=m=h_{\ast}\mu$. From Lemma \ref{lem:lemma3}, it follows that $\mu(\Gamma)=\eta(\Gamma)=0$ and for every continuous function $\psi:\mathbb{T}^{n}\to \mathbb{R}$ we have that 
\begin{align*}
 \int \psi d\mu &= \int_{\mathbb{T}^n\setminus \Gamma}\psi d\mu = \int_{\mathbb{T}^n\setminus \Gamma}\psi\circ h^{-1}\circ h d\mu\\ &=\int_{\mathbb{T}^n\setminus\Gamma}\psi\circ h^{-1} dh_{\ast}\mu =\int_{\mathbb{T}^{n}\setminus\Gamma}\psi\circ h^{-1}dh_{\ast}\eta
 =\int \psi d\eta.   
\end{align*}
Hence, $\mu=\eta$, a contradiction. Therefore, there exists a unique measure of maximal entropy.



\bibliographystyle{acm}
\bibliography{references}

\begin{thebibliography}{10}

\bibitem{aoki1994topological}
{\sc Aoki, N., and Hiraide, K.}
\newblock {\em Topological theory of dynamical systems}, vol.~52 of {\em
  North-Holland Mathematical Library}.
\newblock North-Holland Publishing Co., Amsterdam, 1994.
\newblock Recent advances.

\bibitem{bowen-h-expansive1972}
{\sc Bowen, R.}
\newblock Entropy-expansive maps.
\newblock {\em Trans. Amer. Math. Soc. 164\/} (1972), 323--331.

\bibitem{bowen}
{\sc Bowen, R.}
\newblock Some systems with unique equilibrium states.
\newblock {\em Math. Systems Theory 8}, 3 (1974/75), 193--202.

\bibitem{brin2003dynamical}
{\sc Brin, M.}
\newblock On dynamical coherence.
\newblock {\em Ergodic Theory Dynam. Systems 23}, 2 (2003), 395--401.

\bibitem{buzzifishersambarinovasquez2012}
{\sc Buzzi, J., Fisher, T., Sambarino, M., and V\'{a}squez, C.~H.}
\newblock Maximal entropy measures for certain partially hyperbolic, derived
  from {A}nosov systems.
\newblock {\em Ergodic Theory Dynam. Systems 32}, 1 (2012), 63--79.

\bibitem{cantarino2021anosov}
{\sc Cantarino, M., and Var{\~a}o, R.}
\newblock Anosov endomorphisms on the 2-torus: Regularity of foliations and
  rigidity.
\newblock {\em arXiv preprint arXiv:2104.01693\/} (2021).

\bibitem{costa-micena2021}
{\sc Costa, J. S.~C., and Micena, F.}
\newblock Some generic properties of partially hyperbolic endomorphisms, 2022.

\bibitem{cowiesonyoung}
{\sc Cowieson, W., and Young, L.-S.}
\newblock S{RB} measures as zero-noise limits.
\newblock {\em Ergodic Theory Dynam. Systems 25}, 4 (2005), 1115--1138.

\bibitem{crisostomo-tahzibi2019}
{\sc Crisostomo, J., and Tahzibi, A.}
\newblock Equilibrium states for partially hyperbolic diffeomorphisms with
  hyperbolic linear part.
\newblock {\em Nonlinearity 32}, 2 (2019), 584--602.

\bibitem{diazFisher}
{\sc D\'{\i}az, L.~J., and Fisher, T.}
\newblock Symbolic extensions and partially hyperbolic diffeomorphisms.
\newblock {\em Discrete Contin. Dyn. Syst. 29}, 4 (2011), 1419--1441.

\bibitem{hall2021partially}
{\sc Hall, L., and Hammerlindl, A.}
\newblock Partially hyperbolic surface endomorphisms.
\newblock {\em Ergodic Theory Dynam. Systems 41}, 1 (2021), 272--282.

\bibitem{hall2022classification}
{\sc Hall, L., and Hammerlindl, A.}
\newblock Classification of partially hyperbolic surface endomorphisms.
\newblock {\em Geom. Dedicata 216}, 3 (2022), Paper No. 29, 19.

\bibitem{hammerlindl2012dynamics}
{\sc Hammerlindl, A.}
\newblock Dynamics of quasi-isometric foliations.
\newblock {\em Nonlinearity 25}, 6 (2012), 1585--1599.

\bibitem{kan1994}
{\sc Kan, I.}
\newblock Open sets of diffeomorphisms having two attractors, each with an
  everywhere dense basin.
\newblock {\em Bull. Amer. Math. Soc. (N.S.) 31}, 1 (1994), 68--74.

\bibitem{ledrappier1977relativised}
{\sc Ledrappier, F., and Walters, P.}
\newblock A relativised variational principle for continuous transformations.
\newblock {\em J. London Math. Soc. (2) 16}, 3 (1977), 568--576.

\bibitem{mane1975stability}
{\sc Ma\~{n}\'{e}, R., and Pugh, C.}
\newblock Stability of endomorphisms.
\newblock In {\em Dynamical systems---{W}arwick 1974 ({P}roc. {S}ympos. {A}ppl.
  {T}opology and {D}ynamical {S}ystems, {U}niv. {W}arwick, {C}oventry,
  1973/1974; presented to {E}. {C}. {Z}eeman on his fiftieth birthday)\/}
  (1975), Lecture Notes in Math., Vol. 468, Springer, Berlin, pp.~175--184.

\bibitem{margulis}
{\sc Margulis, G.~A.}
\newblock {\em On some aspects of the theory of {A}nosov systems}.
\newblock Springer Monographs in Mathematics. Springer-Verlag, Berlin, 2004.
\newblock With a survey by Richard Sharp: Periodic orbits of hyperbolic flows,
  Translated from the Russian by Valentina Vladimirovna Szulikowska.

\bibitem{misiurewicz1973diffeomorphism}
{\sc Misiurewicz, M.}
\newblock Diffeomorphism without any measure with maximal entropy.
\newblock {\em Bull. Acad. Polon. Sci. S\'{e}r. Sci. Math. Astronom. Phys.
  21\/} (1973), 903--910.

\bibitem{nunezramirezvasquez}
{\sc N\'{u}\~{n}ez Madariaga, B., Ram\'{\i}rez, S.~A., and V\'{a}squez, C.~H.}
\newblock Measures maximizing the entropy for {K}an endomorphisms.
\newblock {\em Nonlinearity 34}, 10 (2021), 7255--7302.

\bibitem{przytycki1976anosov}
{\sc Przytycki, F.}
\newblock Anosov endomorphisms.
\newblock {\em Studia Math. 58}, 3 (1976), 249--285.

\bibitem{qian2009smooth}
{\sc Qian, M., Xie, J.-S., and Zhu, S.}
\newblock {\em Smooth ergodic theory for endomorphisms}, vol.~1978 of {\em
  Lecture Notes in Mathematics}.
\newblock Springer-Verlag, Berlin, 2009.

\bibitem{sumi1994linearization}
{\sc Sumi, N.}
\newblock Linearization of expansive maps of tori, 1995.

\bibitem{ures2012intrinsic}
{\sc Ures, R.}
\newblock Intrinsic ergodicity of partially hyperbolic diffeomorphisms with a
  hyperbolic linear part.
\newblock {\em Proc. Amer. Math. Soc. 140}, 6 (2012), 1973--1985.

\bibitem{viana_oliveira_2016}
{\sc Viana, M., and Oliveira, K.}
\newblock {\em Foundations of ergodic theory}, vol.~151 of {\em Cambridge
  Studies in Advanced Mathematics}.
\newblock Cambridge University Press, Cambridge, 2016.

\bibitem{weiss1970}
{\sc Weiss, B.}
\newblock Intrinsically ergodic systems.
\newblock {\em Bull. Amer. Math. Soc. 76\/} (1970), 1266--1269.

\end{thebibliography}
\end{document}